\newtheorem{theorem}{Theorem}
\newtheorem{lemma}[theorem]{Lemma}
\newtheorem{conjecture}[theorem]{Conjecture}
\newcommand{\FF}{\mathbf{F}}
\renewcommand{\AA}{\mathbf{A}}
\newcommand{\ZZ}{\mathbf{Z}}
\newcommand{\RR}{\mathbf{R}}
\newcommand{\QQ}{\mathbf{Q}}
\newcommand{\CC}{\mathbf{C}}
\DeclareMathOperator{\Tr}{Tr}
\DeclareMathOperator{\codim}{codim}
\DeclareMathOperator{\conv}{conv}
\DeclareMathOperator{\supp}{supp}
\def\cI{{\mathcal I}}
\def\cJ{{\mathcal J}}
\def\11{{\mathbf 1}}
\numberwithin{theorem}{subsection}
\DeclareMathOperator{\charac}{char}
\DeclareMathOperator{\ord}{ord}
\begin{document}

\title{New bounds for exponential sums with a non-degenerate phase polynomial}
\author{Wouter Castryck}
\address{imec-Cosic, Departement Elektrotechniek, KU Leuven\\
         Kasteelpark Arenberg 10/2452, 3001 Leuven, Belgium}
\email{wouter.castryck@esat.kuleuven.be}
\author{Kien Huu Nguyen}
\address{Laboratoire Paul Painlev\'e, Universit\'e de Lille-1\\
         Cit\'e Scientifique, 59655 Villeneuve d'Ascq Cedex, France}
\email{hkiensp@gmail.com}

\date{}

\begin{abstract} 
We prove a recent conjecture due to Cluckers and Veys on exponential sums modulo $p^m$ for $m \geq 2$ in the special case where the phase polynomial $f$ is sufficiently non-degenerate with respect to its Newton polyhedron at the origin. Our main auxiliary result is an
improved bound on certain related exponential sums over finite fields. This bound
can also be used to settle a conjecture of Denef and Hoornaert on the candidate-leading Taylor coefficient of Igusa's local zeta function associated to a non-degenerate polynomial, at its largest non-trivial real candidate pole.\\

\noindent \emph{Keywords:} Igusa's conjecture, exponential sums, non-degenerate polynomials
\end{abstract}

\maketitle

\section{Introduction}

\subsection{} \label{introintro} Let $f \in \ZZ[x]$ be a non-zero polynomial in the variables $x = x_1, \ldots, x_n$ such that $f(0)=0$. In this article we prove new bounds on the absolute value of exponential sums
of the form
\begin{equation*} 
 S_f(p,m) := \frac{1}{p^{mn}} \sum_{x \in \{0, \ldots, p^m-1\}^n }   
\exp \left( 2 \pi \mathbf{i} \frac{f(x)}{p^m} \right)
\end{equation*}
where $p$ is a prime number and $m \geq 1$ is an integer. We work 
under the assumption that $f$ is non-degenerate with respect to the faces of its Newton polyhedron $\Delta_0(f)$ at the origin, in the strong sense recalled in Section~\ref{nondegrecall} below. 
Concretely, let $\sigma \in \QQ_{>0}$ be such that $(1/\sigma, \ldots, 1/\sigma)$ is contained in a proper face of $\Delta_0(f)$, and let $\kappa$ denote the maximal codimension in $\RR^n$ of such a face. Then we prove
the existence of a constant $c \in \RR_{>0}$ which only depends on $f$ such that
\begin{equation} \label{firstbound}
 | S_f(p,m) | \leq c p^{- \sigma m} m^{\kappa - 1}
\end{equation}
for all sufficiently large prime numbers $p$ and all integers $m \geq 2$. Moreover, if $f$ is supported on a hyperplane which does not contain the origin and which has a normal vector in $\RR_{\geq 0}^n$,
then we can include $m = 1$ in the foregoing statement.

It is known  
that $\sigma_0(f) = \min \{1, \sigma \}$ where $\sigma_0(f)$ denotes the log canonical threshold
$\sigma_0(f)$ of $f$ at the origin~\cite[\S9.3.C]{lazarsfeld}. So our work implies the existence of a constant $c \in \RR_{>0}$ such that
\begin{equation} \label{igusabound}
 | S_f(p,m) | \leq c p^{- \sigma_0(f) m} m^{n - 1}
\end{equation}
for all primes $p$ and integers $m \geq 2$. 
Likewise, if
the support of $f$
is contained in a hyperplane not passing through the origin and having a normal vector in $\RR_{\geq 0}^n$, then
 the same conclusion holds with $m \geq 1$.
The fact that we can write `all primes' rather than `all sufficiently large primes' follows from an observation due to Igusa, which implies the bound~\eqref{igusabound} for all primes $p$ but for some constant $c$ that is a priori allowed to depend on $p$; see e.g.~\cite[p.\,364]{denefbourbaki} and~\cite[p.\,78]{igusa}.

\subsection{} \label{introcontext} Let us give some context for these results. Igusa's conjecture on exponential sums~\cite[p.\,2]{igusa}
predicts that a bound of the form~\eqref{igusabound} should hold for all primes $p$ and all integers $m \geq 1$, regardless of any non-degeneracy assumption but under the condition that $f$ is a non-constant homogeneous polynomial. This is related to the integrability of certain functions over the ad\`eles, which in turn is connected with the validity of a generalized Poisson summation formula of Siegel-Weil type~\cite[Ch.\,4]{igusa}.
  Igusa's conjecture was proven in the non-degenerate case by
Denef and Sperber~\cite[Thm.\,1.2]{denefsperber} subject to a certain combinatorial constraint on $\Delta_0(f)$. This constraint was later removed by Cluckers~\cite[Thm.\,3.1]{cluckersTAMS}, who in fact
proved the bound~\eqref{firstbound} for all $m \geq 1$ and all non-constant \emph{quasi-}homogeneous non-degenerate polynomials $f$.
Cluckers' result naturally leads to the following strengthening of the statement of Igusa's conjecture:
\begin{conjecture}[Igusa, generalization due to Cluckers] \label{igusaconjecture}
For all non-constant quasi-homogeneous polynomials $f \in \ZZ[x]$ there
exists a constant $c \in \RR_{>0}$ such that the bound~\eqref{igusabound} holds for all primes $p$ and all integers $m \geq 1$.
\end{conjecture}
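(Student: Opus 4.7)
The plan is to attempt a reduction of Conjecture~\ref{igusaconjecture} to the non-degenerate case established in the body of this paper. Let $f \in \ZZ[x]$ be quasi-homogeneous of weights $(w_1, \ldots, w_n)$ and degree $d$. I would apply an embedded resolution of singularities $\pi \colon Y \to \AA^n_{\ZZ}$ of the divisor $\{f = 0\}$, equivariant for the weighted $\GG_m$-action; for all primes $p$ outside a finite set of bad reduction, this has good reduction over $\ZZ_p$, and the exceptional divisors $E_i$ come equipped with numerical data $(N_i, \nu_i - 1)$ recording the multiplicities along $E_i$ of $\pi^* f$ and of the relative Jacobian of $\pi$. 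A standard computation then gives $\sigma_0(f) = \min_i \nu_i / N_i$.

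The next step is to pull back $S_f(p,m)$ along $\pi$, partition the $\ZZ/p^m$-points of $Y$ according to the stratification by the $E_i$, and bound each stratum's contribution. On each local chart $\pi^* f$ is a monomial times a unit, which is non-degenerate with respect to its own Newton polyhedron at every relevant face, so the bounds proved in this paper apply directly. The contribution of a depth-$k$ stratum $E_{i_1} \cap \cdots \cap E_{i_k}$ is then bounded by an expression of order $p^{-\tau m} m^{k-1}$, where $\tau = \min_j \nu_{i_j}/N_{i_j}$. Summing over all strata should yield a total bound of the form $c\, p^{-\sigma_0(f) m} m^{n-1}$, matching the conjectural statement~\eqref{igusabound}.

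The principal obstacle is to verify that the polynomial factor in $m$ does not exceed $m^{n-1}$. This requires that the exceptional components attaining the minimum $\sigma_0(f) = \nu_i/N_i$ intersect in strata of codimension at most $n$, which is combinatorially reasonable but delicate to ensure in full generality; one will likely need to choose the resolution with some care, perhaps exploiting the quasi-homogeneity to work on the quotient by $\GG_m$ (essentially a weighted projective link of the singularity). A secondary issue, namely the finitely many primes of bad reduction, can be absorbed into the constant $c$ via Igusa's prime-by-prime analysis. A more robust alternative would be to invoke Cluckers--Loeser motivic exponential integration in order to obtain uniform-in-$p$ bounds directly without separately handling bad primes, though extracting the sharp $m^{n-1}$ factor from that framework appears to be no less difficult than the combinatorial bookkeeping in the resolution-based approach.
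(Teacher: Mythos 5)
This statement is Conjecture~\ref{igusaconjecture}: it is presented in the paper as an \emph{open conjecture}, not as a theorem. The paper proves it only in the non-degenerate case (Theorem~\ref{maintheorem}, building on Cluckers' result for quasi-homogeneous $f$), and explicitly leaves the general quasi-homogeneous case open. So there is no ``paper's own proof'' to compare against; you are attempting to settle a problem the paper does not claim to settle.

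Your resolution-of-singularities strategy also does not close the real gap. Igusa himself used embedded resolution to establish, for each \emph{fixed} prime $p$ of good reduction, a bound $|S_f(p,m)| \leq c_p\, p^{-\sigma_0(f) m} m^{n-1}$, and the finitely many bad primes are indeed absorbable by Igusa's prime-by-prime analysis (this is exactly what the paper cites on p.~\pageref{introintro}). The entire difficulty of the conjecture is to make the constant $c_p$ \emph{uniform in} $p$. Your sketch does not engage with this: when you pull back along $\pi$ and write $\pi^*f$ as ``a monomial times a unit'' on each chart, the resulting sum over a stratum $E_{i_1} \cap \cdots \cap E_{i_k}$ is (after isolating the monomial part) a Gauss-type sum \emph{twisted over the $\FF_p$-points of the reduced stratum}, and bounding this uniformly in $p$ requires nontrivial arithmetic input (e.g.\ Weil/Deligne-type estimates for the varieties appearing, with control over their cohomological complexity, plus some mechanism to handle the unit factor). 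The paper's Theorem~\ref{finitefieldexpsum} supplies exactly this kind of input, but only for polynomials on the torus that are non-degenerate with respect to a Newton polyhedron at the origin — it does not apply to arbitrary strata of an arbitrary resolution, so you cannot simply ``apply the bounds proved in this paper directly.'' In short, the principal obstacle is not the exponent of $m$ (that part is routine once one has uniform estimates on the strata and uses that the lct divisors meet in codimension at most $n$), but uniformity of the implied constants in $p$, and your proposal treats that as a non-issue. The closing suggestion to use Cluckers--Loeser motivic integration to get uniformity ``directly'' is also not correct as stated: motivic integration gives uniform rationality and transfer results, but extracting the sharp decay rate $p^{-\sigma_0(f)m}$ from it is precisely the content of the conjecture and is not automatic.
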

In a recent paper Cluckers and Veys predict~\cite[Conj.\,1.2]{cluckersveys} that assuming $m \geq 2$ should allow to drop the quasi-homogeneity condition from the statement of Igusa's conjecture. 
However, now it should be taken into account that there may exist points $\alpha = (\alpha_1, \ldots, \alpha_n) \in \CC^n$ at which the log canonical threshold $\sigma_\alpha(f)$ of the hypersurface $f(x) - f(\alpha) = 0$ at $\alpha$ is strictly smaller than $\sigma_0(f)$. This phenomenon does not occur in the quasi-homogeneous case.
\begin{conjecture}[Cluckers, Veys] \label{conj_cluckersveys}
For all non-constant polynomials $f \in \ZZ[x]$ there exists a constant $c \in \RR_{>0}$ such that
the bound~\eqref{igusabound} holds for all primes $p$ and all integers $m \geq 2$, provided that
we replace $\sigma_0(f)$ by $\min_{\alpha \in \CC^n} \sigma_\alpha(f)$.
\end{conjecture}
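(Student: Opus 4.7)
The plan is to attack the conjecture by combining an embedded resolution of singularities with a $p$-adic stationary-phase analysis, following the general philosophy that Denef and Cluckers have developed for special cases. The overall goal is to reduce the global sum $S_f(p,m)$ to a finite collection of local sums, one per critical residue modulo $p$, each controlled by a local Igusa zeta function whose largest real pole encodes $\sigma_\alpha(f)$.

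First I would decompose
\[ S_f(p,m) = \sum_{\bar\alpha \in \FF_p^n} T_{\bar\alpha}(p,m), \qquad T_{\bar\alpha}(p,m) := \frac{1}{p^{mn}}\sum_{\substack{y \in (\ZZ/p^m\ZZ)^n \\ y \equiv \bar\alpha \pmod p}} \exp\!\left(\frac{2\pi \mathbf{i}\, f(y)}{p^m}\right). \]
The classical stationary-phase formula of Igusa and Denef shows that $T_{\bar\alpha}(p,m) = 0$ whenever $\grad f(\bar\alpha) \not\equiv 0 \pmod p$ and $f(\bar\alpha) \not\equiv 0 \pmod{p^m}$, and otherwise reduces matters to a similar sum at smaller depth; only residues in the critical locus of $f$ modulo $p$ need be analyzed further. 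For each such $\bar\alpha$ I would fix a lift $\tilde\alpha \in \ZZ_p^n$, translate via $y = \tilde\alpha + z$, and introduce $g_{\tilde\alpha}(z) := f(\tilde\alpha + z) - f(\tilde\alpha)$, which vanishes to at least second order at the origin. The local sum $T_{\bar\alpha}$ is then controlled by the local Igusa zeta function $Z_{g_{\tilde\alpha}}(s) = \int_{\ZZ_p^n} |g_{\tilde\alpha}(z)|_p^s\, dz$; invoking Denef's formula in terms of an embedded resolution of $g_{\tilde\alpha}$ at the origin identifies its largest real pole with $-\sigma_{\tilde\alpha}(f)$, and a Tauberian / contour-shifting argument extracts the bound $|T_{\bar\alpha}(p,m)| \leq c_{\tilde\alpha}\, p^{-\sigma_{\tilde\alpha}(f)\, m}\, m^{n-1}$.

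The principal obstacle is twofold. First, one must produce a \emph{uniform} bound across all critical residues: the number of such residues can be as large as $O(p^{n-1})$, so the local bounds must sum without blowing up the implied constant, which forces one to stratify the critical locus of $f$ by local log canonical threshold and count $\FF_p$-points of each stratum via Lang--Weil. Second, and far more seriously, $\min_{\alpha \in \CC^n} \sigma_\alpha(f)$ may be realized at a geometric point with no $\ZZ_p$-lift, so the naive inequality $\sigma_{\tilde\alpha}(f) \geq \min_{\alpha} \sigma_\alpha(f)$ does not suffice unless one can reduce to a smaller model where every geometrically critical point acquires a representative after a suitable $p$-adic dilation $z \mapsto p^e z$. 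The hypothesis $m \geq 2$ is precisely what makes such a rescaling step affordable, and organising this as an induction on dimension while preserving the logarithmic factor $m^{n-1}$ (which must absorb the worst-case contribution of multiple poles aligning at the leading real pole) is the heart of the argument. This last step is exactly where the Cluckers--Veys conjecture remains open in full generality, and where the non-degeneracy assumption of the present paper is used to sidestep the problem: under non-degeneracy with respect to $\Delta_0(f)$, the resolution data can be read off combinatorially, the critical locus is essentially supported on the coordinate subspaces, and the inductive rescaling terminates after finitely many steps controlled by $\kappa$.
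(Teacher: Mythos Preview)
The statement in question is a \emph{conjecture}; the paper does not prove it in general. What the paper establishes is the special case where $f$ is non-degenerate with respect to the faces of $\Delta_0(f)$ (Theorem~\ref{maintheorem}), and in that case the minimal log canonical threshold is realized at the origin, so Theorem~\ref{maintheorem} combined with Igusa's fixed-$p$ observation already cited in Section~\ref{introintro} yields the conjecture for non-degenerate $f$. There is therefore no ``paper's own proof'' of the full statement to compare against.

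For the non-degenerate case, the paper's method is entirely different from the resolution-and-stationary-phase picture you sketch. The paper never decomposes over residues $\bar\alpha\in\FF_p^n$ and never invokes an embedded resolution or local zeta functions $Z_{g_{\tilde\alpha}}$. Instead it partitions $\ZZ_\mathfrak{p}^n$ by the valuation vector $a=(\ord_\mathfrak{p} x_1,\ldots,\ord_\mathfrak{p} x_n)$, so that on each cell the phase is controlled by the face polynomial $f_{F(a)}$; after a Hensel argument everything reduces to a finite-field exponential sum $\sum_{x\in\FF_\mathfrak{p}^{\ast n}}\varphi(\overline{f}_\tau(x))$ for each face $\tau$ (Section~\ref{sect:red}), and the heart of the paper is the new bound Theorem~\ref{finitefieldexpsum} on that sum. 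The uniformity in $\mathfrak{p}$ and the exponent $\sigma$ come directly from the Newton-polyhedron combinatorics, not from a stratification of the critical locus.

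Your outline is an honest description of how one might try to attack the general conjecture, and you correctly flag the two real obstructions: summing the local bounds over up to $O(p^{n-1})$ critical residues destroys uniformity unless one proves a stratified Lang--Weil estimate with jumps in $\sigma_\alpha$ that exactly compensate, and the minimum $\min_\alpha\sigma_\alpha(f)$ may be attained at a geometric point with no $\FF_p$-rational avatar. You then say that ``this last step is exactly where the Cluckers--Veys conjecture remains open''---which is true, but it also means your proposal is a programme, not a proof. Nothing in the sketch closes either gap, and the single-$\tilde\alpha$ bound you appeal to is precisely Igusa's fixed-$p$ observation, so the genuinely new content of the conjecture (uniformity in $p$) is left untouched.
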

Our contribution to this topic is twofold. Firstly, our result confirms Cluckers and Veys' conjecture in the special case where $f$ is non-degenerate; in this case the foregoing concern is void since the minimal log canonical threshold is always realized by $\sigma_0(f)$. 
Secondly, we raise the question whether in Conjecture~\ref{igusaconjecture} the assumption that $f$ is non-constant and quasi-homogeneous can be relaxed to the 
condition  that
 $\supp f$ is contained in a hyperplane not passing through the origin and having a normal vector in $\RR_{\geq 0}^n$. Here as well, this is provided that
we replace $\sigma_0(f)$ by $\min_{\alpha \in \CC^n} \sigma_\alpha(f)$; we can in fact restrict to 
those $\alpha$ for which $f(\alpha) = 0$ by a version of Euler's identity. We give an affirmative answer in the non-degenerate case. 

\subsection{} 
For convenience we have stated Igusa's conjecture and the Cluckers--Veys conjecture in terms of the log canonical threshold. However, several other versions have been put forward which may, in some cases, predict sharper bounds. In these versions the log canonical threshold is replaced by the motivic oscillation index~\cite{cluckersimrn,cluckersveys}, by the complex oscillation index~\cite[\S13.1.5]{arnoldbook}, or by minus the largest non-trivial real pole of Igusa's local zeta function associated with $f$~\cite{igusa}. In fact, in the Cluckers--Veys conjecture for the log canonical threshold, it is not entirely clear whether the condition $m \geq 2$ is absolutely necessary. The reason for including it comes from the other versions, where it is unavoidable in general (e.g., for
$f = x^2y -x$ as explained in \cite[Ex.\,7.2]{cluckersimrn}).

\subsection{} \label{sect:mainthm} We work at the following level of generality.
We fix a number field $K \supseteq \QQ$, let $\ZZ_K$ denote its ring of integers, and consider a polynomial $f \in \ZZ_K[x]$, where as before $x$ abbreviates a list of $n \geq 1$ variables $x_1, \ldots, x_n$. For each non-zero prime ideal $\mathfrak{p} \subseteq \ZZ_K$ we consider the $\mathfrak{p}$-adic completion $K_\mathfrak{p}$ of $K$, along with  its ring of integers 
$\ZZ_\mathfrak{p} = \{ \, a \in K_\mathfrak{p} \, | \, \ord_\mathfrak{p}(a) \geq 0 \, \}$
and its residue field $\FF_\mathfrak{p} = \ZZ_\mathfrak{p} / \mathfrak{p} $, whose cardinality we denote by
$N\mathfrak{p}$.  We denote by $| \cdot |_\mathfrak{p} = (N\mathfrak{p})^{-\ord_\mathfrak{p} (\cdot)}$ the corresponding non-archimedean norm on $K_\mathfrak{p}$.

Let $p$ be the prime number below $\mathfrak{p}$, and consider 
the additive
character
\[ \psi_\mathfrak{p} : K_\mathfrak{p} \rightarrow \CC^\ast : a \mapsto \exp (2 \pi \mathbf{i} \Tr_{K_\mathfrak{p} / \QQ_p} (a))  \]
where $\exp(2 \pi \mathbf{i} \, \cdot\, )$
is evaluated on $p$-adic numbers as follows: 
given $a \in \QQ_p$ we let $a'$ be a representant inside $\ZZ[1/p]$ of the residue class of $a$ modulo the ring of $p$-adic integers $\ZZ_p$ and we define
$\exp (2 \pi \mathbf{i} a)$ as $\exp (2 \pi \mathbf{i} a')$. 
Then to each $y \in K_\mathfrak{p}$ we associate the integral
\[ S_{f,\mathfrak{p}}(y) := \int_{\ZZ_\mathfrak{p}^n} \psi_\mathfrak{p}(yf(x)) | dx| \]
where $|dx| = |dx_1 \wedge \ldots \wedge dx_n |$ denotes the Haar measure, normalized such that
the volume of $\ZZ_\mathfrak{p}^n$ is $1$. 
Notice that the sum $S_f(p,m)$
from Section~\ref{introintro} can be rewritten as
$S_{f, (p)}(p^{-m})$.

Our main result is as follows:

\begin{theorem} \label{maintheorem}
  Let $f \in \ZZ_K[x]$ be a non-zero polynomial such that $f(0)=0$ and assume that it is non-degenerate with respect to the faces of its Newton polyhedron $\Delta_0(f)$ at the origin. 
Let $\sigma \in \QQ_{>0}$ be such that $(1/\sigma, \ldots, 1/\sigma)$ is contained in a proper face of $\Delta_0(f)$, and let $\kappa$ denote the maximal codimension of such a face.  
  Then there exists a constant $c \in \RR_{>0}$ only depending on $\Delta_0(f)$ such that for all non-zero prime ideals $\mathfrak{p} \subseteq \ZZ_K$ for which $N\mathfrak{p}$ is sufficiently large and all $y \in K_\mathfrak{p}$ satisfying $\ord_\mathfrak{p}(y) \leq -2$, we have
\[ \left| S_{f, \mathfrak{p}}(y)  \right| \leq c | y |_\mathfrak{p}^{-\sigma} | \ord_\mathfrak{p} (y) |^{\kappa-1}. \]
If $\supp f$ is contained in a hyperplane not passing through the origin and having a normal vector in $\RR_{\geq 0}^n$, then moreover $c$ can be chosen such that the bound
also applies to all $y \in K_\mathfrak{p}$ for which $\ord_\mathfrak{p}(y) = -1$.
\end{theorem}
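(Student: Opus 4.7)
The plan is to decompose the $\mathfrak{p}$-adic integral $S_{f,\mathfrak{p}}(y)$ along a rational simplicial fan adapted to $\Delta_0(f)$, reduce each piece to a classical exponential sum over the residue field, and then apply the paper's improved torus-sum estimate (the main auxiliary result announced in the abstract).

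First I would choose a rational simplicial subdivision $\Sigma$ of $\RR_{\geq 0}^n$ refining the normal fan of $\Delta_0(f)$. For every open cone $\Delta \in \Sigma$ the support function $m_\Delta(k) = \min_{v \in \Delta_0(f)} \langle k, v\rangle$ is linear and attained on a single face $\tau = \tau(\Delta)$ of $\Delta_0(f)$. Partitioning $\ZZ_\mathfrak{p}^n$ by the value of $(\ord_\mathfrak{p}(x_1), \ldots, \ord_\mathfrak{p}(x_n)) = k \in \overline{\Delta} \cap \ZZ_{\geq 0}^n$ and substituting $x_i = \pi^{k_i} u_i$ with $\pi$ a uniformizer and $u_i \in \ZZ_\mathfrak{p}^\ast$, one obtains $f(x) = \pi^{m_\Delta(k)}(\widetilde{f}_\tau(u) + \pi\, r_k(u))$, where $\widetilde{f}_\tau$ reduces mod $\pi$ to the face polynomial $\overline{f}_\tau$. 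Writing $m := -\ord_\mathfrak{p}(y) \geq 2$, integration by parts in $u$ (fuelled by the non-vanishing partials of $\overline{f}_\tau$ on the torus, a consequence of non-degeneracy) shows the $k$-piece vanishes unless $m_\Delta(k) \leq m - 1$; for $m_\Delta(k) \in \{m-2, m-1\}$ it reduces, up to a volume factor $N\mathfrak{p}^{-\langle k,\11\rangle}$, to an exponential sum on $(\FF_\mathfrak{p}^\ast)^n$ with phase $\overline{f}_\tau$, which the paper's improved bound estimates uniformly in $\mathfrak{p}$ with quality matched to $\codim \tau$.

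Assembling these bounds, and using $\sigma \langle \11, \nu_\tau\rangle = 1$ on any face $\tau$ containing $\sigma(1,\ldots,1)$ (with $\nu_\tau$ the primitive outward normal realising $m_\Delta$), the total becomes
\[ |S_{f,\mathfrak{p}}(y)| \;\leq\; c'\, N\mathfrak{p}^{-\sigma m} \sum_{\Delta \in \Sigma}\, \sum_{\substack{k \in \overline{\Delta} \cap \ZZ_{\geq 0}^n \\ m_\Delta(k) \leq m}} N\mathfrak{p}^{\langle k,\, \sigma \nu_\tau - \11\rangle}. \]
On cones whose closure contains $\sigma(1,\ldots,1)$ the pairing $\langle k,\sigma \nu_\tau - \11\rangle$ vanishes identically, and the number of admissible $k$ with $m_\Delta(k) \leq m$ is $O(m^{\kappa - 1})$ by the definition of $\kappa$. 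On every other cone the pairing is strictly negative off the origin, so the corresponding geometric series is $O(1)$. Multiplying through yields the claimed bound $c |y|_\mathfrak{p}^{-\sigma}|\ord_\mathfrak{p}(y)|^{\kappa - 1}$.

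The main obstacle will be producing precisely the factor $m^{\kappa-1}$: a naive Adolphson--Sperber estimate on the torus sums would degrade the count and over-shoot the exponent of $m$. The improvement required — a uniform bound on the torus sum for $\overline{f}_\tau$ whose exponent is calibrated exactly to $\codim\tau$ — is the content of the main auxiliary result and will presumably rest on a careful stratification of the non-degenerate variety along the toric boundary. The hyperplane hypothesis allowing $\ord_\mathfrak{p}(y) = -1$ is what makes the correction $r_k$ vanish on the cones that contribute at $m = 1$ (a quasi-homogeneity in the direction orthogonal to the hyperplane), so the integration-by-parts reduction remains valid down to $m = 1$; without it the error $\pi r_k$ is of the same $\pi$-adic order as the main term at $m=1$ and cannot be discarded.
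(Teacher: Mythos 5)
Your skeleton is the same as the paper's (Section~\ref{sect:red}, following Denef--Sperber and Cluckers): partition $\ZZ_\mathfrak{p}^n$ by valuation, substitute $x_i = \pi^{a_i}u_i$, use non-degeneracy of the face polynomial together with Hensel's lemma to sort the contributions, and feed the resulting torus sums into the improved estimate of Theorem~\ref{finitefieldexpsum}. But your accounting of which pieces survive is reversed, and this is not a cosmetic slip. With $m = -\ord_\mathfrak{p}(y)$, the argument of the character has order $N(a)-m$ after the substitution. The Hensel/stationary--phase argument kills the $a$-piece when $N(a)-m \leq -2$, i.e.\ when $m_\Delta(a) \leq m-2$; the piece with $m_\Delta(a) = m-1$ is the one that becomes a genuine toral exponential sum; and the pieces with $m_\Delta(a) \geq m$ do \emph{not} vanish --- the character is trivial there and each contributes the volume $(1-N\mathfrak{p}^{-1})^n N\mathfrak{p}^{-\nu(a)}$. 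Your display therefore discards the infinite tail $m_\Delta(a) \geq m$, which is precisely the contribution called $A_{\tau,\mathfrak{p}}$ in the paper, while retaining the slice $m_\Delta(a) = m-2$ which is actually zero. Furthermore, your lattice--point count is off by a power of $m$: the closed cone over $F(\vec{1})$ in the normal fan has dimension $\kappa$, so the number of $a$ in it with $m_\Delta(a) \leq m$ is $O(m^{\kappa})$, not $O(m^{\kappa-1})$; the $m^{\kappa-1}$ comes either from the slice $m_\Delta(a)=m-1$, or from the geometric decay in $N\mathfrak{p}^{-\nu(a)}$ over the tail --- both of which your set-up drops. The paper bounds $A_{\tau,\mathfrak{p}}$ and $B_{\tau,\mathfrak{p}}$ by citing \cite[Prop.\,5.2]{cluckersTAMS} or \cite[\S3.4]{denefsperber} rather than re-deriving these estimates, and the new ingredient is only the finite-field bound of Theorem~\ref{finitefieldexpsum} applied to the slice $N(a)=m-1$.

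Your explanation of the $m=1$ case is also not the paper's mechanism, and I do not think it is correct as stated. The correction $\pi r_a$ does not vanish when $\supp f$ lies in a hyperplane $H$; it records the terms of $f$ off the face $F(a)$, which persist. What the hyperplane hypothesis actually buys is this: when $\ord_\mathfrak{p}(y) = -1$ the slice $N(a)=0$ contributes torus sums for faces $\tau$ whose affine span passes through the origin, and Theorem~\ref{finitefieldexpsum} does not apply directly to those. But since $\supp f \subseteq H$, one has $\supp f_\tau \subseteq \tau \cap H$, so $\overline{f}_\tau = \overline{f}_{\tau\cap H}$ with the same $\sigma$-invariant, and $\tau\cap H$ does lie in a hyperplane avoiding the origin with a non-negative normal; one then applies the theorem to $\tau\cap H$ instead.
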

\noindent 
It is clear that Theorem~\ref{maintheorem} implies the claims made in Sections~\ref{introintro} and~\ref{introcontext}.

The condition that $N \mathfrak{p}$ is sufficiently large allows
us to assume that
 $f$ is non-degenerate at 
$\mathfrak{p}$, by which we mean that
\[ f_\mathfrak{p} := f \bmod \mathfrak{p} \in \FF_\mathfrak{p}[x] \]
is non-degenerate with respect to the faces of its Newton polygon $\Delta_0(f_\mathfrak{p})$ at the origin
and $\Delta_0(f_\mathfrak{p}) = \Delta_0(f)$.
If $f \in \ZZ_K[x] \subseteq K[x]$ is non-degenerate with respect to the faces of $\Delta_0(f)$ to start from,
then it is indeed non-degenerate at all but finitely many non-zero prime ideals $\mathfrak{p} \subseteq \ZZ_K$; this follows, for instance, from Hilbert's Nullstellensatz.

Although we avoid a detailed discussion, we note that our proof of Theorem~\ref{maintheorem} also applies to other global fields, such as the field of rational functions $K = \FF_q(t)$ over a finite field $\FF_q$. However, here we have the extra condition that $\charac \FF_q$ should 
not be contained in the set $P$ of bad primes associated with $\supp f$, appearing in the statement of Theorem~\ref{finitefieldexpsum} below.
 (More generally, the discussion from~\cite[\S9]{cluckersduke} in the context of Igusa's conjecture applies here, too.)

\subsection{} 
In Section~\ref{section_background} we will explain how Theorem~\ref{maintheorem} arises as a consequence of the following
finite field exponential sum estimate,
which is the central auxiliary result of this paper and which is proven in Section~\ref{sect:finitefieldexpsum}.
\begin{theorem} \label{finitefieldexpsum}
Let $\FF_q$ be a finite field with $q$ elements and let $f \in \FF_q[x]$ be non-degenerate with respect to the faces of its Newton polyhedron $\Delta_0(f)$ at the origin.
Suppose that $\supp f$
is contained in a hyperplane which does not contain the origin and which has a normal vector in $\RR_{\geq 0}^n$. 
Let $\sigma \in \QQ_{>0}$ be maximal such that $(1/\sigma, \ldots, 1/\sigma) \in \Delta_0(f)$ and let
$\varphi : \FF_q \rightarrow \CC^\ast$ be a non-trivial additive character on $\FF_q$.  There exist a constant $c \in \RR_{>0}$ which only depends on $\Delta_0(f)$ and a finite set of primes $P$ which only depends $\supp f$ such that 
\[ \left| \, \frac{1}{(q-1)^n} \sum_{x \in \FF_q^{\ast n}} \varphi(f(x)) \, \right| \, < \, c q^{-\sigma} \]
as soon as $p = \charac \FF_q \notin P$.
\end{theorem}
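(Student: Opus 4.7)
The plan is to exploit the quasi-homogeneous structure of $f$ forced by the support hypothesis, reducing the exponential sum via a Gauss-sum decomposition to a combination of point counts on level sets, each of which can be bounded using non-degeneracy and Adolphson--Sperber type estimates.

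Because $\supp f$ lies on a hyperplane $\{\langle a, \cdot\rangle = d\}$ with primitive $a \in \ZZ_{\geq 0}^n$ and $d \in \ZZ_{>0}$, the polynomial $f$ is quasi-homogeneous of weighted degree $d$ with weights $a$, and a quick calculation gives $\sigma = (\sum_i a_i)/d$. The associated $\GG_m$-action $\lambda\cdot x = (\lambda^{a_1}x_1,\ldots,\lambda^{a_n}x_n)$ on $\GG_m^n$ scales $f$ by $\lambda^d$. Averaging the identity $\varphi(f(\lambda\cdot x)) = \varphi(\lambda^d f(x))$ over $\lambda \in \FF_q^*$ and inserting the Gauss-sum decomposition
\[ \sum_{\lambda \in \FF_q^*}\varphi(\lambda^d y) \;=\; -1 + \sum_{\chi \neq 1,\, \chi^d = 1}\bar\chi(y)\, g(\chi,\varphi) \qquad (y \in \FF_q^*), \]
I would rewrite the target sum as
\[ \sum_{x \in \FF_q^{*n}}\varphi(f(x)) \;=\; \frac{qN_0 - (q-1)^n}{q-1} \;+\; \frac{1}{q-1}\sum_{\chi \neq 1,\, \chi^d = 1} g(\chi,\varphi)\, T_\chi, \]
where $N_0 = \#\{x \in \FF_q^{*n}: f(x)=0\}$, $T_\chi = \sum_x \bar\chi(f(x))$, and each $g(\chi,\varphi)$ has modulus $\sqrt{q}$.

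The next step is to bound both contributions by appealing to Adolphson--Sperber type estimates for non-degenerate toric character sums, applied to $f$ (twisted by either an additive character or a non-trivial multiplicative character $\bar\chi$ of order dividing $d$). Concretely the aim is to establish $|qN_0 - (q-1)^n| = O(q^{n+1-\sigma})$ so that the first term after normalization is $O(q^{-\sigma})$, and separately that the Gauss-weighted twisted character sum contributes no worse than $O(q^{-\sigma})$; the number of characters involved is at most $d-1$ and each Gauss factor has modulus $\sqrt q$, so it suffices to obtain the corresponding bound on $T_\chi$. The finite set $P$ of excluded characteristics comprises those primes at which either the Gauss-sum decomposition degenerates (primes dividing $d$ or the entries of $a$) or non-degeneracy of $f$ fails to descend to $\FF_q$; by the Nullstellensatz this set depends only on $\supp f$.

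The main obstacle is obtaining the sharp $q^{-\sigma}$ savings when $\sigma$ exceeds the generic Deligne saving of $1/2$, where the naive Weil bound on a single hypersurface in the torus is insufficient. I would address this through an induction on $n$ via the toric stratification of $\FF_q^{*n}$: on each stratum indexed by a coordinate subspace $\{x_i = 0 \text{ for } i \in I\}$, the restriction $f_I$ inherits non-degeneracy with respect to a face $\tau_I$ of $\Delta_0(f)$, and (for faces on which $f_I$ is non-trivial) that face itself has support on a hyperplane with normal in the non-negative orthant of the lower-dimensional ambient space, so the inductive hypothesis applies. Combining the stratum-wise bounds via inclusion-exclusion, and bookkeeping the combinatorial multiplicities into a constant depending only on $\Delta_0(f)$, should then yield the required global $q^{-\sigma}$ estimate.
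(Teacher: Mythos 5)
Your Gauss-sum decomposition is formally correct, but it does not make progress on the problem and contains a basic misconception that short-circuits the rest. Having $\supp f$ on a hyperplane whose normal lies only in $\RR_{\geq 0}^n$ (not $\RR_{>0}^n$) does \emph{not} force $f$ to be quasi-homogeneous: if some weight $a_i$ vanishes, the $\GG_m$-action is trivial in the $i$-th coordinate and $f$ is merely quasi-homogeneous as a polynomial over $\FF_q[x_i : a_i = 0]$. That is precisely the new case addressed by the theorem; the genuinely quasi-homogeneous case (all weights positive) is already Cluckers~\cite[Prop.\,6.2]{cluckersTAMS}, as the paper notes. Your closed formula $\sigma = (\sum_i a_i)/d$ is likewise wrong in general: it depends on the choice of supporting hyperplane and agrees with the true $\sigma$ only when the diagonal ray meets $\Delta_0(f)$ on the chosen hyperplane.

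More seriously, after the Gauss-sum reduction you need $|qN_0 - (q-1)^n| = O(q^{n+1-\sigma})$ together with $|T_\chi| = O(q^{n-\sigma})$ for the multiplicative twists. The first of these is exactly Lemma~\ref{plugin1} of the paper, which the paper proves \emph{as a consequence of} Theorem~\ref{finitefieldexpsum}, so invoking it here is circular; the twisted bounds are at least as hard. Generic Adolphson--Sperber/Deligne machinery gives savings of order $q^{n/2}$ from the total degree cohomology, which is in general far weaker than $q^{n-\sigma}$ and is not sharpened merely by the non-degeneracy hypothesis. Indeed, getting the extra $q^{1/2 - \sigma}$ over square-root cancellation is the entire content of the paper. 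Finally, the ``induction on $n$ via the toric stratification of $\FF_q^{*n}$'' that you propose does not parse: the sum is already over the open torus, so there are no coordinate-subspace strata inside it to stratify further; an inclusion--exclusion from $\FF_q^n$ down to $\FF_q^{*n}$ would be running the reduction in the wrong direction and would not interact with the needed $q^{-\sigma}$ savings.

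The paper's actual strategy is quite different. It separates the variables by weight into three groups: zero-weight variables $z$, maximal-weight (linear) variables $x_1,\ldots,x_s$, and intermediate-weight variables $y$. The sum over the $x_i$'s is a pure linear character sum and is evaluated exactly; the $z$-variables are handled by a Lang--Weil stratification of $\overline{\FF}_q^{*r}$ according to which coefficients $g_i(z)$ vanish and the dimension of the critical locus of $h(\cdot,z)$; the $y$-sum over each stratum is controlled by Cluckers' Katz-based estimate~\cite[Thm.\,7.4]{cluckersTAMS}. Everything then reduces to a combinatorial inequality (Theorem~\ref{5}), which is proved by induction on the dimension of the strata, slicing along a generic value $z_r = a$ and transferring generic non-degeneracy of slices from characteristic zero to almost all finite characteristics via Robinson's compactness principle (Lemma~\ref{3}). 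You would need a mechanism playing the role of this induction and of Lemma~\ref{cluckerslemma} linking $\sigma$ to the codimension of the critical locus; the proposal as written supplies neither.
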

\noindent If the hyperplane can be chosen such that it has a normal vector in $\RR_{>0}^n$ then $f$ is quasi-homogeneous, in which case the foregoing result is due to Cluckers~\cite[Prop.\,6.2 \& its proof]{cluckersTAMS}. 

\subsection{} \label{denefhoornaertintro} Theorem~\ref{finitefieldexpsum} also implies  
a conjecture by Denef and Hoornaert, involving
Igusa's local zeta function
\[ Z_{f, \mathfrak{p}} : \{ s \in \CC \, | \, \Re s > 0 \} \rightarrow \CC : s \mapsto \int_{\ZZ_\mathfrak{p}^n} |f(x)|^s_\mathfrak{p} |dx|,  \]
which one can associate to any non-zero prime ideal $\mathfrak{p} \subseteq \ZZ_K$ and any polynomial $f \in \ZZ_K[x]$.
 It is well-known that this is a rational function in $N\mathfrak{p}^{-s}$, so it admits a meromorphic continuation 
 to the entire complex plane, where one may see poles showing up. These poles are believed to contain important arithmetic and geometric information about the hypersurface $f = 0$; see \cite{denefbourbaki,segers} for more background.
In our setting where $f$ is non-constant, vanishing at $0$, and non-degenerate with respect to the faces of its Newton polyhedron at the origin, Hoornaert proved~\cite[\S4.3]{hoornaert} that there is always at least one real pole and that the largest such pole is either $s = -\sigma$ or $s = -1$. If $s=-1$ is a pole, it is called trivial. The expected pole order of $-\sigma$ is $\kappa$, unless $-\sigma = -1$ in which case the expected pole order is $\kappa + 1$. Here `expected' means that the actual pole order is bounded by the said quantity and typically equals it; a sufficient condition for equality is $\sigma < 1$, but see~\cite[Thms.\,5.5, 5.17, 5.19]{denefhoornaert} and~\cite[Thm.\,4.10]{hoornaert} for more precise statements. 

In Section~\ref{sect:denef_hoornaert} we will demonstrate how Theorem~\ref{finitefieldexpsum} implies a certain uniformity in $\mathfrak{p}$ of (what is expected to be) the leading Taylor coefficient at $s = -\sigma$. More precisely, we will show that the real number
\begin{equation*} 
 \lim_{s \rightarrow -\sigma} (N\mathfrak{p}^{s + \sigma} - 1)^{\kappa + \delta_{\sigma, 1}} Z_{f,\mathfrak{p}}(s) 
\end{equation*}
 is in $O(N\mathfrak{p}^{1 - \max \{1,\sigma\}})$ as $\mathfrak{p}$ varies; here $\delta_{\cdot, \cdot}$ denotes the Kronecker delta. This was proved by Denef and Hoornaert~\cite[\S5]{denefhoornaert} under a certain combinatorial assumption on $\Delta_0(f)$ which they conjectured to be superfluous. Our work confirms their conjecture. 
 One important subtlety is that Denef and Hoornaert work under a considerably weaker notion of non-degeneracy than we do, so that their conjecture is a priori stronger. However as explained in Section~\ref{sect:weakernotion_consequences} this is not a concern: we will see that our a priori weaker conclusion easily implies the Denef--Hoornaert conjecture in its full strength. This will rely on some conclusions made in Section~\ref{nondegversions}, where we elaborate on the difference between both non-degeneracy notions.

\subsection*{Acknowledgements} 
We would like to thank Raf Cluckers for outlining the strategy followed in this paper and for several other helpful remarks. Part of this work was prepared while the first author was affiliated with the Universit\'e de Lille-1 and with the University of Ghent. This work was supported by the European Commission through the European Research Council under the FP7/2007-2013 programme with ERC Grant Agreement 615722 MOTMELSUM.

\section{Non-degenerate polynomials and the invariant $\sigma$} \label{section_background}

\subsection{} \label{nondegrecall} Let us recall what it means for a polynomial  to be non-degenerate with respect to the faces of its Newton polyhedron at the origin, while fixing some notation. Let $k$ be a field, which from Section~\ref{sect:red} on will be either a number field $K$ or a finite field $\FF_q$. Let 
\[ f = \sum_{i \in \ZZ_{\geq 0}^n} c_i x^i \in k[x]\setminus \{0\} \] 
where as before $x = x_1, \ldots, x_n$ and $x^i = x^{(i_1, \ldots, i_n)}$ abbreviates $x_1^{i_1} \cdots x_n^{i_n}$. The Newton polyhedron of $f$ at the origin is defined as
\[ \Delta_0(f) = \conv \supp f + \RR_{\geq 0}^n, \]
with $\supp f = \left\{ \left.  i \in \ZZ_{\geq 0}^n \, \right| \, c_i \neq 0 \right\}$ the support of $f$. 
For all non-empty faces $\tau\subseteq \Delta_0(f)$ of any dimension, ranging from vertices to $\Delta_0(f)$ itself, we write 
\[ f_\tau = \sum_{i \in \tau \cap \ZZ_{\geq 0}^n} c_i x^i \]
and we say that $f$ is non-degenerate with respect to $\tau$ if the system of equations
\[ \frac{\partial f_\tau}{\partial x_1} = \ldots = \frac{\partial f_\tau}{\partial x_n} = 0 \]
has no solutions in $\overline{k}^{\ast n}$, or in other words if the map
\[ \overline{k}^{\ast n} \rightarrow \overline{k} : \alpha \mapsto f_\tau(\alpha) \] 
has no critical values. Here $\overline{k}$ denotes an algebraic closure of $k$. Self-evidently we call $f$ non-degenerate with respect to the faces (resp.\ the compact faces) of $\Delta_0(f)$ if it is non-degenerate with respect to all choices (resp.\ all compact choices) of $\tau$.

\subsection{} \label{nondegversions} Our non-degeneracy notion arises naturally in the study of exponential sums, but is considerably stronger than some of its counterparts which can be found elsewhere in the existing literature. Most notably, often one merely imposes the generic condition that $0$ is not a critical value, or in other words that the hypersurface $f_\tau = 0$ has no \emph{singularities} in $\overline{k}^{\ast n}$, rather than critical points. 
We will refer to the latter notion of non-degeneracy as \emph{weak} non-degeneracy. (This is not intended to become standard terminology: the only purpose it serves is to avoid confusion throughout the remainder of this paper.) Clearly, if $f$ is non-degenerate with respect to a face $\tau \subseteq \Delta_0(f)$, then it is also weakly non-degenerate with respect to that face. An important remark is that the converse holds as soon as $\tau$ is contained in a hyperplane of the form
\[ H = \{ \, (i_1, \ldots, i_n) \, | \, c_1 i_1 + \ldots + c_n i_n = b \, \} \] 
with $c_1, \ldots, c_n, b \in \ZZ$ satisfying $\text{char} \, k \nmid b$. This can be seen using a weighted version of Euler's identity.
As a consequence, we could have equally well formulated Theorem~\ref{finitefieldexpsum} assuming weak non-degeneracy, rather than non-degeneracy.


\subsection{} It is convenient to introduce the following notation: to each vector $a = (a_1, \ldots, a_n) \in \RR_{\geq 0}^n$ we associate 
\begin{align*}
 \nu(a) & =  a_1 + \ldots + a_n, \qquad
 N(a) = \min_{x \in \Delta_0(f)} x \cdot a, \\
 F(a)  & = \{ \, x \in \Delta_0(f) \, | \, x \cdot a = N(a) \, \}.
\end{align*}
The latter set is a face of $\Delta_0(f)$ which is called the first meet locus of $a$. It is contained in the hyperplane  $a_1 x_1 + \ldots + a_n x_n = N(a)$. Every face arises as
the first meet locus of some vector, where we note that
$\Delta_0(f)$ itself is given by
$F(\vec{0}) = F((0, \ldots, 0))$. Similarly writing $\vec{1} = (1, \ldots, 1)$, we define 
\begin{align*}
\kappa & = \codim F(\vec{1}), \\
\sigma & = \nu(\vec{1})/N(\vec{1}) = n / N(\vec{1})
\end{align*}
as in the introduction; if $N(\vec{1}) = 0$ then we let $\sigma = + \infty$. Note that if $\sigma < + \infty$ then
$\sigma N(a) \leq \nu(a)$ for all $a \in \RR_{\geq 0}^n$. 
We will usually write $\sigma(f)$ rather than $\sigma$ to emphasize the dependence on $f$. It is natural to define $\sigma(0) = 0$.

\subsection{} For use in Section~\ref{sect:auxiliary} we prove the following list of properties of $\sigma$, which we believe to be interesting in their own right:

\begin{lemma} \label{sigma_props}
 Let $f \in k[x]$ and $g \in k[y]$ be polynomials vanishing at the origin, in the respective variables $x = x_1, \ldots, x_n$ and $y = y_1, \ldots, y_m$. Then:
 \begin{itemize}
   \item[(i)] $\sigma(f + g) = \sigma(f) + \sigma(g)$,
   \item[(ii)] $\sigma(fg) = \min \{ \sigma(f), \sigma(g) \}$,
   \item[(iii)] if $n \geq 2$ then $\sigma(f) \geq \sigma(f(x_1, \ldots, x_{n-1}, x_{n-1})$. 
 \end{itemize}
Here $f + g$ and $fg$ are viewed as polynomials in $k[x,y]$, while $f(x_1, \ldots, x_{n-1}, x_{n-1})$ is viewed as an element of $k[x_1, \ldots, x_{n-1}]$.
\end{lemma}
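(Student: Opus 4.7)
The plan is to use the characterisation
\[ 1/\sigma(f) = \min\{ t \geq 0 : (t, \ldots, t) \in \Delta_0(f) \}, \]
so that $\sigma(f)$ records the diagonal entry of the first point at which the diagonal ray from the origin meets $\Delta_0(f)$. Each of the three identities then follows by describing $\Delta_0$ under the relevant polynomial operation. I may assume $f, g \neq 0$, since the convention $\sigma(0) = 0$ handles those edge cases; under $f(0) = g(0) = 0$ one automatically has $0 < \sigma(f), \sigma(g) < +\infty$, because $\supp f, \supp g \subseteq \ZZ_{\geq 0}^{\, \cdot} \setminus \{0\}$ prevents the origin from lying in $\Delta_0$.

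For (i), since $f$ and $g$ involve disjoint sets of variables, $\supp(f+g) \subseteq \ZZ_{\geq 0}^{n+m}$ is the disjoint union of $\supp f \times \{0\}^m$ and $\{0\}^n \times \supp g$; hence a typical point of $\conv\supp(f+g)$ has the form $(\alpha P, \beta Q)$ with $P \in \conv\supp f$, $Q \in \conv\supp g$, $\alpha, \beta \geq 0$ and $\alpha + \beta = 1$. Asking that the diagonal point $(t, \ldots, t) \in \RR^{n+m}$ lie in $\Delta_0(f+g)$ then reduces (for $\alpha, \beta > 0$) to the two conditions $\alpha/\sigma(f) \leq t$ and $\beta/\sigma(g) \leq t$; minimising $\max(\alpha/\sigma(f), \beta/\sigma(g))$ subject to $\alpha + \beta = 1$ via the balancing equation $\alpha/\sigma(f) = \beta/\sigma(g)$ produces the optimum $t = 1/(\sigma(f)+\sigma(g))$, while the boundary cases $\alpha \in \{0,1\}$ only yield the strictly larger values $1/\sigma(g)$ or $1/\sigma(f)$. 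Inverting gives $\sigma(f+g) = \sigma(f) + \sigma(g)$.

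For (ii), no cancellation can occur in the product, so $\supp(fg) = \supp f \times \supp g$; combining the standard identity $\conv(A \times B) = \conv A \times \conv B$ with the splitting $\RR_{\geq 0}^{n+m} = \RR_{\geq 0}^n \times \RR_{\geq 0}^m$ yields $\Delta_0(fg) = \Delta_0(f) \times \Delta_0(g)$. The diagonal point then lies in $\Delta_0(fg)$ iff it lies in each factor, so the smallest admissible $t$ is $\max\{1/\sigma(f), 1/\sigma(g)\}$, giving $\sigma(fg) = \min\{\sigma(f), \sigma(g)\}$. For (iii), set $h = f(x_1, \ldots, x_{n-1}, x_{n-1})$ and introduce the linear projection $\pi : \RR^n \to \RR^{n-1}$ sending $(i_1, \ldots, i_n)$ to $(i_1, \ldots, i_{n-2}, i_{n-1}+i_n)$. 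The substitution maps the monomial $x^i$ to $x^{\pi(i)}$, so $\supp h \subseteq \pi(\supp f)$ (possibly properly, due to cancellations among coefficients) and hence $\Delta_0(h) \subseteq \pi(\Delta_0(f))$. With $t = 1/\sigma(h)$, I lift the point $(t, \ldots, t) \in \Delta_0(h)$ to some $v \in \Delta_0(f)$ with $\pi(v) = (t, \ldots, t)$, forcing $v_i = t$ for $i \leq n-2$ and $v_{n-1} + v_n = t$ with $v_{n-1}, v_n \geq 0$; then $(t, \ldots, t) - v \in \RR_{\geq 0}^n$, so $(t, \ldots, t) \in \Delta_0(f)$ and consequently $\sigma(f) \geq \sigma(h)$.

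The only real subtlety is in part (i), where the optimisation over $(\alpha, \beta)$ has to be carried out carefully to confirm that the minimum is attained at the balancing point rather than at a boundary of the simplex; the remaining arguments amount to routine convex-geometric manipulations.
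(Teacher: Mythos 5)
Your proof is correct and follows essentially the same convex-geometric route as the paper's: all three parts rest on the characterisation of $1/\sigma$ as the diagonal coordinate of the entry point of the ray $\RR_{\geq 0}\cdot\vec{1}$ into $\Delta_0$, combined with the descriptions $\Delta_0(f+g) = \conv(\Delta_0(f)\times\{0\} \cup \{0\}\times\Delta_0(g))$, $\Delta_0(fg)=\Delta_0(f)\times\Delta_0(g)$, and the lifting of the diagonal point along the coordinate-summing projection $\pi$. The only stylistic difference is in (i), where the paper exhibits the specific balanced convex combination and checks it lies on $\partial\Delta_0(f+g)$, whereas you run the minimisation over $(\alpha,\beta)$ explicitly; these are the same computation in two presentations.
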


\noindent Specifying the ambient ring is actually not needed: viewing $f(x_1, \ldots, x_{n-1}, x_{n-1})$ as an element of $k[x_1, \ldots, x_n]$ does not change the corresponding value of $\sigma$.

\begin{proof}[Proof of the lemma]
We assume that $f,g \neq 0$, since otherwise the listed properties are trivial.

(i) If $P \in \partial\Delta_0(f)$ and $Q \in \partial \Delta_0(g)$, where $\partial$ denotes the topological boundary, then every convex combination of $(P,0)$ and $(0,Q)$ is contained in $\partial \Delta_0(f+g)$. Applying this to $P = (1/\sigma(f), \ldots, 1/\sigma(f)) \in \partial \Delta_0(f)$ and $Q = (1/\sigma(g), \ldots, 1/\sigma(g)) \in \partial \Delta_0(g)$ and considering the convex combination 
\[ \frac{\sigma(f)}{\sigma(f) + \sigma(g)} (P,0) + \frac{\sigma(g)}{\sigma(f) + \sigma(g)} (0,Q),\]
we see that the desired conclusion follows.

(ii) Assume without loss of generality that the minimum is realized by $\sigma(f)$. Note that 
\begin{equation*} \Delta_0(fg) = \Delta_0(f) \times
\Delta_0(g),
\end{equation*}
from which one sees that $1/ \sigma(fg) \geq 1/\sigma(f)$ or in other words that $\sigma(fg) \leq \sigma(f)$. For the converse inequality, we write $(1/\sigma(f), \ldots, 1/\sigma(f)) \in \RR_{\geq 0}^{n+m}$ as
\[ \left(\frac{1}{\sigma(f)}, \ldots, \frac{1}{\sigma(f)},  \frac{1}{\sigma(g)}, \ldots, \frac{1}{\sigma(g)}
 \right) + 
\left( 0, \ldots, 0,  \frac{1}{\sigma(f)} - \frac{1}{\sigma(g)}, \ldots, \frac{1}{\sigma(f)} - \frac{1}{\sigma(g)} \right), \]
which is seen to be an element of $\Delta_0(fg)$, proving that $\sigma(fg) \geq \sigma(f)$.

(iii) Write $f' = f(x_1, \ldots, x_{n-1}, x_{n-1})$. We claim that if $(i_1, \ldots, i_{n-2}, i) \in \Delta_0(f')$ then there exist $a,b \in \RR_{\geq 0}$ such that $(i_1, \ldots, i_{n-2}, a,b) \in \Delta_0(f)$ and $a + b = i$. Indeed, this property is clear for any point in the support of $f'$, and the claim follows by convexity considerations. Now apply this claim to the point
\[ \left( \frac{1}{\sigma(f')}, \ldots, \frac{1}{\sigma(f')} \right) \in \Delta_0(f') \]
to find $a,b \in \RR_{\geq 0}$ for which
 \[ \left( \frac{1}{\sigma(f')}, \ldots, \frac{1}{\sigma(f')}, a, b \right) \in \Delta_0(f) \]
with $a + b = 1/\sigma(f')$.  When adding $(0, \ldots, 0, 1/\sigma(f') - a, 1/\sigma(f') - b) \in \RR_{\geq 0}^n$ to this, we stay inside $\Delta_0(f)$, from which we conclude the desired inequality.
\end{proof}

 We remark that these properties are reminiscent of well-known facts on the log canonical threshold at the origin.
Indeed, 
with the notation and assumptions from above (and the extra assumption that $f, g \neq 0$), one has that
\begin{itemize}
  \item[(a)] $\sigma_0(f + g) = \min \{ 1, \sigma_0(f) + \sigma_0(g) \}$,
  \item[(b)] $\sigma_0(fg) = \min \{ \sigma_0(f), \sigma_0(g) \}$,
  \item[(c)] $\sigma_0(f) \geq \sigma_0(f(x_1, \ldots, x_{n-1}, x_{n-1}))$.
\end{itemize}
See~\cite[Rmk.\,2.10]{demaillykollar}, \cite[Prop.\,9.5.22]{lazarsfeld} and \cite[Thm.\,7.5 \& proof of Thm.\,8.20]{kollar}, respectively.
In fact, statements (i -- iii) could have also been settled as mere corollaries to (a -- c), by using the following trick. Notice that properties (i -- iii) are purely combinatorial, so we can replace $f$ and $g$ by non-degenerate polynomials over $\CC$ having the same Newton polyhedron; for what follows it suffices to assume weak non-degeneracy, which is generically satisfied. Next let
 $f_d = f(x_1^d, \ldots, x_n^d)$ and $g_d = g(y_1^d, \ldots, y_m^d)$ for some large positive integer $d$. These polynomials are again non-degenerate, and moreover $\Delta_0(f_d) = d \Delta_0(f)$ and $\Delta_0(g_d) = d \Delta_0(g)$ so that $\sigma(f) = d \sigma(f_d)$ and $\sigma(g) = d \sigma(g_d)$. If $d$ is large enough such that $\sigma(f_d) + \sigma(g_d) \leq 1$, then one sees that properties (i -- iii) follow from properties (a -- c) and the fact that $\sigma_0(h) = \min \{1, \sigma(h) \}$ for any polynomial $h$ that is weakly non-degenerate with respect to the faces of its Newton polyhedron at the origin~\cite[\S9.3.C]{lazarsfeld}.

\subsection{} We conclude with a bound on $\sigma(f)$ in terms of the dimension of the affine critical locus $C_f$ of $f$, i.e.\ the set of geometric points of $\AA^n_k$ at which all partial derivatives of $f$ vanish, where
we adopt the convention that the dimension of the empty scheme is $- \infty$.
If $k$ is a number field then the bound
can be viewed as a corollary to a general observation due to Cluckers: see~\cite[Thm.\,5.1]{cluckersimrn} and how this is applied in~\cite[Lem.\,6.3]{cluckersduke}, for instance. Our more direct approach has the advantage of working over any field.

\begin{lemma} \label{cluckerslemma}
 Let $f \in k[x] \setminus \{0\}$ be non-degenerate with respect to the faces of its Newton polyhedron $\Delta_0(f)$ at the origin, and denote by $\delta$ the dimension inside $\AA^n_k$ of $C_f$. Then $\sigma(f) \leq (n-\delta)/2$.
\end{lemma}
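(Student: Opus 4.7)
The plan is to use the fundamental inequality $\sigma(f) \leq \nu(a)/N(a)$, valid for every $a \in \RR_{\geq 0}^n$ with $N(a) > 0$, by producing an appropriate $a$ that exploits the structure of a maximal-dimensional component of $C_f$. If $C_f$ is empty the bound is vacuous; otherwise I would fix an irreducible component $Z \subseteq C_f$ with $\dim Z = \delta$ and set $I := \{i \in \{1,\ldots,n\} : x_i|_Z \not\equiv 0\}$, so that $Z$ is generically contained in the coordinate torus $T_I := \{\alpha \in \overline{k}^n : \alpha_i \neq 0 \iff i \in I\}$. The first observation is that the face $\tau_I := \Delta_0(f) \cap \{j \in \RR^n : j_{i'} = 0 \text{ for all } i' \notin I\}$ must be empty: otherwise, for any $\alpha \in Z \cap T_I$, a direct computation using the definition of $f_{\tau_I}$ shows that $\alpha|_I \in (\overline{k}^\ast)^I$ would be a critical point of $f_{\tau_I}$, contradicting non-degeneracy with respect to $\tau_I$. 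Hence $f \in (x_j : j \notin I)$, and to each $j \notin I$ we associate the polynomial $\tilde h_j \in k[x_i : i \in I]$ assembled from those monomials of $f$ whose exponents on $\overline{I} := \{1,\ldots,n\} \setminus I$ equal $e_j$. Let $J := \{j \notin I : \tilde h_j \neq 0\}$; the same computation identifies $C_f \cap T_I$ with the common zero locus $V_J := \bigcap_{j \in J} \{\tilde h_j = 0\}$ inside $(\overline{k}^\ast)^I$.

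The technical crux, and the step I expect to be the main obstacle, is the transversality statement: at every point $\alpha|_I \in V_J$, the gradient vectors $\{\grad \tilde h_j(\alpha|_I) : j \in J\}$ are linearly independent. For each non-empty $J' \subseteq J$, let $a_{J'} \in \RR_{\geq 0}^n$ assign weight $1$ to the coordinates in $J'$, weight $0$ to those in $I$, and a sufficiently large weight $M$ to those in $\overline{I} \setminus J'$. A direct computation shows that $F(a_{J'})$ is a face of $\Delta_0(f)$ with
\[ f_{F(a_{J'})} \;=\; \sum_{j \in J'} x_j \tilde h_j, \]
so non-degeneracy of $f$ with respect to $F(a_{J'})$ forbids the existence of $\alpha|_I \in V_{J'}$ together with $(\alpha_j)_{j \in J'} \in (\overline{k}^\ast)^{J'}$ satisfying $\sum_{j \in J'} \alpha_j \grad \tilde h_j(\alpha|_I) = 0$. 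Were linear dependence to occur at some $\alpha|_I \in V_J$, one could choose $J' \subseteq J$ to be a minimal linearly dependent subset at that point; the (unique up to scalar) vanishing relation would then have all non-zero coefficients, yielding exactly such a forbidden configuration. So transversality holds, and combining $\dim V_J = |I| - |J|$ (from transversality) with $\dim V_J = \delta$ (which holds since $V_J \supseteq Z \cap T_I$ has dimension $\geq \delta$ while $V_J \subseteq C_f$ forces $\dim V_J \leq \delta$), one obtains $|J| = |I| - \delta$.

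To finish, take $a := 2 \sum_{j \in J} e_j + \sum_{j \in \overline{I} \setminus J} e_j$, for which $\nu(a) = 2|J| + (n - |I| - |J|) = n - |I| + |J|$. Every monomial of $f$ either (i) contributes to some $\tilde h_j$ with $j \in J$, in which case the exponent vector is $e_j + \sum_{i \in I} a_i e_i$ and $x \cdot a = 2$, or (ii) has $\sum_{j \notin I} x_j \geq 2$ (by the definition of $J$ combined with $f \in (x_j : j \notin I)$), in which case $x \cdot a \geq \sum_{j \notin I} x_j \geq 2$ because every coefficient of $a$ on $\overline{I}$ is at least $1$. Hence $N(a) = 2$, and
\[ \sigma(f) \;\leq\; \frac{\nu(a)}{N(a)} \;=\; \frac{n - |I| + |J|}{2} \;=\; \frac{n - \delta}{2}, \]
as desired.
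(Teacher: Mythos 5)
Your proof is correct (modulo the harmless overstatement that $N(a)=2$ in the last step; in general one only has $N(a)\geq 2$, which still gives $\sigma(f)\leq\nu(a)/N(a)\leq(n-\delta)/2$) and it follows essentially the same route as the paper's: restrict to a coordinate stratum, isolate the variables from $\{1,\ldots,n\}\setminus I$ that occur linearly with coefficients $\tilde h_j$ (the paper's $g_i$), show the common zero locus of these coefficients is empty or a smooth complete intersection by testing non-degeneracy against the face carrying $\sum_{j\in J'}x_j\tilde h_j$, and then cap $\sigma(f)$ with an explicit weight vector. The only cosmetic difference is that you fix a single maximal-dimensional component of $C_f$, obtaining the equality $|J|=|I|-\delta$, whereas the paper runs over all coordinate orbits $C_I$ and settles for the inequality $\delta_I\leq d-\ell$.
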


\begin{proof}
By non-degeneracy with respect to the entire Newton polyhedron $\Delta_0(f)$, every critical point of $f$ has at least one coordinate which is zero. Therefore it suffices to prove for all proper subsets $I$ of $\{1, \ldots, n\}$ that
\[ \sigma(f) \leq (n - \delta_I)/2 \]
where $\delta_I = \dim C_I$ with $C_I = \{ \, (a_1, \ldots, a_n) \in C_f \, | \, a_i \neq 0 \text{ if and only if } i \in I \, \}$. 

 By reordering the variables if needed we can assume that $I = \{ n - d + 1, \ldots, n \}$ for some integer $d$ which satisfies $0 \leq \delta_I \leq d < n$. If $C_I = \emptyset$ then there is nothing to prove. If $C_I \neq \emptyset$ then it contains at least one point $(a_1, \ldots, a_n)$, which by our assumption satisfies $a_1 = \ldots = a_{n-d} = 0$ and $a_{n-d+1}, \ldots, a_n \neq 0$.  We first claim that
\begin{equation} \label{polyhedron_winc}
  \Delta_0(f) \subseteq \Delta_0(x_1 + \ldots + x_{n-d}) \times 
  \RR_{\geq 0}^d.
\end{equation}
If we let $H$ denote the face of $\RR_{\geq 0}^n$ defined by 
\[ i_1 = \ldots = i_{n-d} = 0, \quad i_{n-d+1}, \ldots, i_n \geq 0 \]
and we define $\tau := \Delta_0(f) \cap H$,
then~\eqref{polyhedron_winc} is equivalent to $\tau = \emptyset$. Now suppose that $\tau  \neq \emptyset$: then it must concern a face of $\Delta_0(f)$. But $\partial f_\tau / \partial x_i$ vanishes identically for $i = 1, \ldots, n-d$, while it vanishes at
  $(1, \ldots, 1, a_{n-d+1}, \ldots, a_n)$ for $i = n-d + 1, \ldots, n$. This is in contradiction with the fact that $f$ is non-degenerate with respect to $\tau$, so our claim follows.

We now let $\ell \in \{0, \ldots, n-d\}$ be 
minimal such that
\[ \Delta_0(f) \subseteq \Delta_0(x_1 + \ldots + x_\ell + x_{\ell+1}^2 + \ldots + x_{n-d}^2) \times \RR_{\geq 0}^2 \]
up to reordering the variables $x_1, \ldots, x_{n-d}$. In other words $f$
can be written as
\[ f = x_1 \cdot g_1(x_{n-d+1}, \ldots, x_n) + \ldots + x_\ell \cdot g_\ell(x_{n-d+1}, \ldots, x_n) + \ldots \]
for non-zero polynomials $g_1, \ldots, g_\ell \in k[x_{n-d+1}, \ldots, x_n]$, where the last dots consist of terms that are at least quadratic in the variables $x_1, \ldots, x_{n-d}$.
It is easy to check that $\sigma(f) \leq (n-d+\ell)/2$, so it suffices to show that $\delta_I \leq d - \ell$. This is trivial if $\ell = 0$, so assume that $\ell \geq 1$. By taking partial derivatives one observes
that 
\[ C_I = \{ \, (0, \ldots, 0, a_{n-d+1}, \ldots, a_n) \, | \, (a_{n-d+1}, \ldots, a_n) \in S_I \, \} \]
with $S_I \subseteq \overline{k}^{\ast d}$ the scheme defined by $(g_1, \ldots, g_\ell)$. 
We will establish the desired bound on $\delta_I$ by proving that $S_I$ is either empty or a smooth complete intersection. 

By the Jacobian criterion 
this amounts to showing that for all $(a_{n-d+1}, \ldots, a_n) \in S_I$ the rows of the matrix
\[ J = \begin{pmatrix} \frac{\partial g_1}{\partial x_{n-d+1}}(a_{n-d+1}, \ldots, a_n) & \ldots & \frac{\partial g_1}{\partial x_n}(a_{n-d+1}, \ldots, a_n) \\ 
\vdots & \ddots & \vdots \\
\frac{\partial g_\ell}{\partial x_{n-d+1}}(a_{n-d+1}, \ldots, a_n)& \ldots & \frac{\partial g_\ell}{\partial x_n}(a_{n-d+1}, \ldots, a_n)\end{pmatrix} \]
are linearly independent. Suppose this is not the case, then there exists a vector
$(\alpha_1, \ldots, \alpha_\ell) \neq (0,\ldots, 0)$ such that
$(\alpha_1, \ldots, \alpha_\ell) \cdot J = \vec{0}$.
Assume without loss of generality that $\alpha_1, \ldots, \alpha_{\ell'} \neq 0$ and $\alpha_{\ell'+1} = \ldots = \alpha_\ell = 0$, where $0 < \ell' \leq \ell$. Now consider the face $\tau \subseteq \Delta_0(f)$ obtained by intersecting $\Delta_0(f)$ with
\[  H : i_1 + \ldots + i_{\ell'} = 1, i_{\ell'+1} = \ldots = i_\ell = i_{\ell + 1} = \ldots = i_{n-d} = 0, i_{n-d+1}, \ldots, i_n \geq 0. \]
Then
\[ f_\tau = x_1 \cdot g_1(x_{n-d+1}, \ldots, x_n) + \ldots + x_{\ell'} \cdot g_{\ell'}(x_{n-d+1}, \ldots, x_n) \]
and one verifies that $(\alpha_1, \ldots, \alpha_{\ell'}, 1, \ldots, 1, 1, \ldots, 1, a_{n-d+1}, \ldots, a_n) \in \overline{k}^{\ast n}$ is a common root of its partial derivatives: a contradiction with the non-degeneracy assumption with respect to $\tau$. 
 \end{proof}

\section{Reduction to estimating finite field exponential sums} \label{sect:red}

\subsection{} \label{rewriteintegral} In this section we explain how proving Theorem~\ref{maintheorem} reduces to
proving the bound on finite field exponential sums stated in Theorem~\ref{finitefieldexpsum}. This resorts to a well-known reasoning by Denef and Sperber~\cite[Prop.\,2.1]{denefsperber}, a slight generalization of which was elaborated by Cluckers~\cite[Prop.\,4.1]{cluckersTAMS}. 
The idea is to partition the integration domain $\ZZ_\mathfrak{p}^n$
according to all possible valuations, ignoring 
the zero-measure set of points $x$ in which $0$ appears as a coordinate:
\[ S_{f, \mathfrak{p}}(y) = \int_{\ZZ_\mathfrak{p}^n} \psi_\mathfrak{p}(yf(x)) | dx| = \sum_{a \in \ZZ_{\geq 0}^n} \int_{\substack{x \in \ZZ_\mathfrak{p}^n \\ \ord_\mathfrak{p}(x) = a}} \psi_\mathfrak{p}(yf(x)) | dx|. \]
Let $\pi_\mathfrak{p}$ be a uniformizing parameter of $\ZZ_\mathfrak{p}$; if $\mathfrak{p}$ is unramified then one can just take $\pi_\mathfrak{p} = p$.
By introducing new variables $u = (u_1, \ldots, u_n)$ through the substitution $x_j \leftarrow \pi_\mathfrak{p}^{a_j} u_j$, with $a_j$ the $j$th coordinate of $a$, we can rewrite the above sum as
\[ \sum_{a \in \ZZ_{\geq 0}^n} N\mathfrak{p}^{-\nu(a)} \int_{u \in \ZZ_\mathfrak{p}^{\ast n}} \psi_\mathfrak{p}(y \pi_\mathfrak{p}^{N(a)}(f_{F(a)}(u) + \pi_\mathfrak{p} (\ldots) )) | du| \]
where the expression $(\ldots)$ takes values in $\ZZ_\mathfrak{p}$.  As explained in Section~\ref{sect:mainthm} we can assume that $f$ is non-degenerate at $\mathfrak{p}$, in which case Hensel's lemma implies that the integral is zero whenever $\ord_\mathfrak{p}(y) + N(a) \leq -2$. On the other hand since $\psi_\mathfrak{p}$ is trivial on $\ZZ_\mathfrak{p}$, as soon as $\ord_\mathfrak{p}(y) + N(a) \geq 0$ the integral is just the measure of $\ZZ_\mathfrak{p}^{\ast n}$, which is $(1 - N\mathfrak{p}^{-1})^n$. The most interesting case is where $\ord_\mathfrak{p}(y) + N(a) = -1$, in which case the integral equals
\[ N \mathfrak{p}^{-n} \sum_{x \in \FF_\mathfrak{p}^{\ast n}} \varphi_y (\overline{f}_{F(a)} (x)) \]
where 
\[ \varphi_y : \FF_\mathfrak{p} \rightarrow \CC^\ast : x \bmod \mathfrak{p} \mapsto \psi_\mathfrak{p}(y \pi_\mathfrak{p}^{-\ord_\mathfrak{p}(y)-1} x)\] and $\overline{f}_{F(a)}$ denotes the reduction of $f_{F(a)}$ mod $\mathfrak{p}$. Note that
$\varphi_y$ is a well-defined non-trivial additive character on $\FF_\mathfrak{p}$.

We end up with
\[ S_{f, \mathfrak{p}}(y) = (1 - N\mathfrak{p}^{-1})^n  
\sum_{ \substack{ \tau \text{ face} \\ \text{of } \Delta_0(f)}}
\left( A_{\tau, \mathfrak{p}}(y) + 
\frac{B_{\tau, \mathfrak{p}}(y)}{(N\mathfrak{p} - 1)^n} 
\sum_{x \in \FF_\mathfrak{p}^{\ast n}} \varphi_y (\overline{f}_\tau (x))
\right)
\]
where
\[
A_{\tau, \mathfrak{p}}(y) =  
\sum_{ \substack{ a \in \ZZ_{\geq 0}^n \text{\,s.t.\,} F(a) = \tau \\ \text{and } N(a) \geq -\ord_\mathfrak{p}(y) }  }  N\mathfrak{p}^{-\nu(a)}
\quad  \text{and}
\quad 
B_{\tau, \mathfrak{p}}(y) = \sum_{ \substack{ a \in \ZZ_{\geq 0}^n \text{\,s.t.\,} F(a) = \tau \\ \text{and } N(a) = -\ord_\mathfrak{p}(y) - 1 }  }  N\mathfrak{p}^{-\nu(a)}.
 \]
Here a trivial but important remark is that 
$B_{\tau, \mathfrak{p}}(y) = 0$ 
as soon as $\ord_\mathfrak{p}(y) \leq -2$ and the affine span of $\tau$ passes through the origin, because in this case $N(a) = 0$ while $-\ord_\mathfrak{p}(y) -1 \geq 1$.

\subsection{} Using the estimates from~\cite[Prop.\,5.2]{cluckersTAMS} or~\cite[\S3.4]{denefsperber} one sees that there exists a constant $c \in \RR_{>0}$ such that
\[ A_{\tau, \mathfrak{p}}(y) \leq c |y|_\mathfrak{p}^{-\sigma} | \ord_\mathfrak{p}(y)|^{\kappa - 1} \quad \text{and} \quad
B_{\tau, \mathfrak{p}}(y) \leq c |y|_\mathfrak{p}^{-\sigma} N\mathfrak{p}^{\sigma(f_\tau)} |\ord_\mathfrak{p}(y) |^{\kappa -1} \]
for all choices of $\mathfrak{p}$ and $y$,
where we recall that $\sigma(f_\tau) \leq \sigma$ is the minimal rational number such that $(1/\sigma(f_\tau), \ldots, 1/ \sigma(f_\tau))$ is contained in $\Delta_0(f_\tau) = \tau + \RR_{\geq 0}^n$. 
Using these bounds one verifies that in order to prove Theorem~\ref{maintheorem} it suffices to show 
that for all faces $\tau \subseteq \Delta_0(f)$ there
exists a constant $c$ only depending on $\Delta_0(f)$ such that
\begin{equation} \label{neededffexpsum} \left| \, \frac{1}{(N\mathfrak{p} - 1)^n} 
\sum_{x \in \FF_\mathfrak{p}^{\ast n}} \varphi_y (\overline{f}_\tau (x)) \, \right| \, \leq \, c \cdot N\mathfrak{p}^{-\sigma(f_\tau)}. 
\end{equation}
for all non-zero prime ideals $\mathfrak{p}$ having a sufficiently large norm.
In fact, it suffices to establish this bound for the faces $\tau$ whose affine span does not contain the origin.
Indeed, if $\ord_\mathfrak{p}(y) \leq -2$ then this claim is straightforward in view of the remark concluding Section~\ref{rewriteintegral}. On the other hand, if $\ord_\mathfrak{p}(y) = -1$ and $\supp f$ is contained in a hyperplane $H$
which
does not contain the origin and which has a normal vector in $\RR_{\geq 0}^n$, then in order to conclude~\eqref{neededffexpsum} for a given face $\tau \subseteq \Delta_0(f)$, it suffices to prove it for the face $\tau \cap H$, whose affine span does not contain the origin.

\section{New bounds for finite field exponential sums} \label{sect:finitefieldexpsum}

\subsection{} \label{finiteintro} Thus we are left with proving Theorem~\ref{finitefieldexpsum}.
Let $H$ be a hyperplane as in the statement; 
we can assume it to be of the form
\[ H : c_1i_1 + \ldots + c_ni_n = b \]
for $b \in \ZZ_{> 0}$ and $c_1, \ldots, c_n \in \ZZ_{\geq 0}$. Without loss of generality we can order the variables such that $c_1, \ldots, c_{n - r} > 0$ and $c_{n - r + 1}, \ldots, c_n = 0$. For simplicity we choose $H$ such that
$r$ is maximal. We can assume that $r > 0$ because if 
$r = 0$ then $f$ is quasi-homogeneous, and as mentioned in this case Theorem~\ref{finitefieldexpsum} is due to Cluckers~\cite[Prop.\,6.2]{cluckersTAMS}. 
Define $P$ as the set of prime numbers dividing $b$ and assume throughout the rest of this section that $\charac \FF_q \notin P$. Note that $P$ clearly depends on $\supp f$ only. In fact it even depends on $\Delta_0(f)$ only, but the reason for writing $\supp f$ in the statement of Theorem~\ref{finitefieldexpsum} is that  $P$ will be enlarged in Section~\ref{genericnondeglemma}, in a way which could a priori depend on the specific configuration of $\supp f$. 

\subsection{} \label{boundreduction} Rename the variables $x_{n-r + 1}, \ldots, x_n$ as $z = z_1, \ldots, z_r$ and view $f$ as a quasi-homogeneous polynomial over $\FF_q[z]$ in the remaining variables.
Note that none of the weights $c_j$ can exceed $b$, otherwise it would be possible to remove the term $c_ji_j$, which contradicts the maximality of $r$. We order the indices such that $x_1, \ldots, x_s$ are the variables for
which the corresponding weights $c_1, \ldots, c_s$ are equal to $b$. These necessarily appear linearly with a non-zero coefficient in $\FF_q[z]$, again by the maximality of $r$. The variables $x_{s+1}, \ldots, x_{n-r}$ which have a smaller non-zero weight $c_i$ are renamed $y = y_1, \ldots, y_t$. Note that $n = s+r+t$.
Then we can write
$$f = h+g_{1}x_{1}+ \ldots +g_{s}x_{s},$$
where $g_1, \ldots, g_s \in \FF_q[z] \setminus \{0\}$ and where $h \in \FF_q[y,z]$
is quasi-homogeneous when considered over $\FF_q[z]$. Moreover for each  concrete value of $z \in \overline{\FF}_q^r$ the polynomial $h(\cdot, z) \in \overline{\FF}_q[y]$ admits $y = 0$ as a critical point. For the sake of clarity, we note that this includes the case where $h(\cdot, z)$ is identically zero, in which case every point is considered critical.

For each $I\subseteq \{1, \ldots ,s\}$ and $0\leq d\leq t$ we define  
$$V_{I,d}=\{ \, z\in \overline{\FF}_q^{\ast r} \,| \, \dim C_{h(\cdot, z)}=d, g_{i}(z)=0\Leftrightarrow i\in I \, \}$$
and we write
\begin{align*}
& \left|\dfrac{1}{(q-1)^{n}}\sum_{(x,y,z)\in \FF_q^{\ast n}} \varphi(f(x,y,z))\right|\\
&=\left|\sum_{I,d}\sum_{z\in V_{I,d}(\FF_q)}\dfrac{1}{(q-1)^{n}}[\sum_{x\in \FF_{q}^{\ast s}}\varphi(\sum_{i\notin I}g_{i}(z)x_{i})]\times [\sum_{y\in \FF_q^{\ast t}}\varphi(h(y,z))] \right|\\
&=\left|\sum_{I,d}\sum_{z\in V_{I,d}(\FF_q)}\dfrac{1}{(q-1)^{n}}(-1)^{s-\#I}(q-1)^{\#I}\times [\sum_{y\in \FF_q^{ \ast t}}\varphi(h(y,z))] \right|\\
&\leq \sum_{I,d} c_{I,d} q^{\dim V_{I,d}} (q-1)^{\# I - n} \left| \sum_{y\in \FF_q^{ \ast t}}\varphi(h(y,z)) \right|
\end{align*}
where we recall our convention that the dimension of the empty scheme is $-\infty$. In the last step we used the Lang--Weil 
estimates, which introduce constants $c_{I,d} \in \RR_{>0}$ that can be taken to depend on $I,d$ and $\Delta_0(f)$ only.

\subsection{} If $h$ is identically zero then $\dim C_{h(\cdot, z)} = t$ for all $z$ and the foregoing bound simplifies to
\begin{equation} \label{boundhis0} 
  \sum_I c_{I} q^{\dim V_I} (q-1)^{t + \#I - n}
\end{equation}
where $c_I = c_{I,t}$ and
\[ V_I = V_{I,t} = \{ \, z \in \overline{\FF}_q^{\ast r} \, | \, g_i(z) = 0 \Leftrightarrow i \in I \, \} . \] By the non-degeneracy of $f$, as in the proof of Lemma~\ref{cluckerslemma} we see that the scheme cut out by $(g_i)_{i \in I}$ is either empty or a smooth complete intersection. In particular $\dim V_I \leq r - \#I$. From this we see that~\eqref{boundhis0} is bounded by $cq^{-s}$ where $c = \sum_I c_I$. Now from
\[ \Delta_0(f) = \Delta_0(g_1x_1 + \ldots + g_sx_s) \subseteq \Delta_0(x_1 + \ldots + x_s) \times \RR_{\geq 0}^{t + r} \]
it is immediate that $\sigma(f) \leq s$, from which the desired bound follows.

\subsection{} \label{sect:auxiliary} Thus we can assume that $h$ is not identically zero.
In this case we estimate the bound from Section~\ref{boundreduction} by 
 \begin{align*}
&\sum_{I,d} c_{I,d} q^{\dim V_{I,d}} (q-1)^{\# I - n} a_d q^{\frac{t+d}{2}} \\
&\leq \sum_{I,d} c_{I,d}a_{d}q^{\dim V_{I,d} +\frac{t+d}{2}}(q-1)^{\#I-n}.
\end{align*}
Here we used an estimate due to Cluckers~\cite[Thm.\,7.4]{cluckersTAMS}, which introduces constants $a_d \in \RR_{>0}$ and excludes a finite set of field characteristics. 
From the proof of~\cite[Thm.\,7.4]{cluckersTAMS} and the references therein, mainly to~\cite[Cor.\,6.1]{cluckersduke} which in turn invokes~\cite[Thm.\,4]{katz}, it follows that $a_d$ can be taken to depend on $d$ and $\Delta_0(f)$ only. It also follows that the excluded field characteristics are 
contained in $P$, so this was
already accounted for (by our assumption that $\charac \FF_q \notin P$).

Therefore it suffices to prove that
for each $I,d$ we have 
\begin{equation} \label{1} 
\dim V_{I,d} +\frac{t+d}{2}+\#I-n\leq -\sigma(f).
\end{equation}
Now consider the following lemma: 
\begin{lemma}\label{2}
Let $I\subseteq \{1,...,s\}$ and define 
$$f_{I} = h + \sum_{i\in I} g_{i} x_{i} \in \FF_q[(x_i)_{i \in I}, y, z].$$
Then $\sigma(f) \leq \sigma(f_{I})+s-\#I$. In particular we have that $\sigma(f) \leq \sigma(h)+s$.
\end{lemma}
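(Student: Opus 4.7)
My plan is to introduce an auxiliary polynomial whose two summands live in disjoint sets of variables, so that Lemma~\ref{sigma_props}(i) applies directly, and then to transfer the resulting bound back to $f$ via a Newton-polyhedron inclusion.

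Concretely, I would set
\[ \tilde f := f_I + \sum_{j \in \{1,\ldots,s\} \setminus I} x_j, \]
noting that $f_I$ uses only the variables $(x_i)_{i \in I}, y, z$ whereas $\sum_{j \notin I} x_j$ uses only the disjoint set $(x_j)_{j \notin I}$. Lemma~\ref{sigma_props}(i) therefore gives $\sigma(\tilde f) = \sigma(f_I) + \sigma\bigl(\sum_{j \notin I} x_j\bigr)$, and the second summand is immediate from the definition of $\sigma$: the polyhedron $\conv\{e_j : j \notin I\} + \RR_{\geq 0}^{s-\#I}$ meets the main diagonal first at $(1/(s-\#I), \ldots, 1/(s-\#I))$, so $\sigma\bigl(\sum_{j \notin I} x_j\bigr) = s - \#I$. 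Hence $\sigma(\tilde f) = \sigma(f_I) + s - \#I$.

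Next I would establish $\Delta_0(f) \subseteq \Delta_0(\tilde f)$ inside $\RR^n$. Writing $f = f_I + \sum_{j \notin I} g_j(z) x_j$, every monomial of $g_j(z) x_j$ has exponent vector of the form $e_j + (0,\ldots,0,\beta)$ with $\beta \in \supp g_j \subseteq \RR_{\geq 0}^r$, and this lies in $e_j + \RR_{\geq 0}^n \subseteq \Delta_0(\tilde f)$ because $e_j \in \supp \tilde f$. Combined with the obvious inclusion $\supp f_I \subseteq \supp \tilde f$, this gives $\supp f \subseteq \Delta_0(\tilde f)$ and hence the claimed inclusion of polyhedra. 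Since $(1/\sigma, \ldots, 1/\sigma)$ is by definition the point on the main diagonal closest to the origin that lies in the respective Newton polyhedron, the inclusion immediately yields $\sigma(f) \leq \sigma(\tilde f) = \sigma(f_I) + s - \#I$; the ``in particular'' clause follows by taking $I = \emptyset$, in which case $f_\emptyset = h$.

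I do not foresee any real obstacle: once $\tilde f$ is written down, the entire argument reduces to a transparent comparison of Newton polyhedra. The one subtlety worth a sanity check is the ambient-ring issue in applying Lemma~\ref{sigma_props}(i), since $f_I$ and $\sum_{j \notin I} x_j$ naturally live in different polynomial rings and $\tilde f$ in their joint extension. This is harmless thanks to the ambient-independence remark immediately following Lemma~\ref{sigma_props}, which ensures that $\sigma$ is insensitive to such enlargements of the variable set.
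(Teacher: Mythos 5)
Your argument is correct, and it takes a genuinely different route than the paper's. The paper proceeds by induction on $s - \#I$: at each step it replaces $g_j(z_1,\ldots,z_r)x_j$, for a single $j\notin I$, by $g_j(u_1,\ldots,u_r)x_j$ in fresh variables $u$, invokes Lemma~\ref{sigma_props}(iii) to pass from $z$ to $u$ (un-identifying variables can only increase $\sigma$), then applies (i) to the disjoint-variable sum $f_I + g_j(u)x_j$, and finally bounds $\sigma(g_j(u)x_j)\leq 1$. You instead replace all of the terms $g_j(z)x_j$ with $j\notin I$ by the bare monomials $x_j$ at once, apply (i) a single time to get $\sigma(\tilde f)=\sigma(f_I)+(s-\#I)$, and transfer the bound back to $f$ via the Newton-polyhedron inclusion $\Delta_0(f)\subseteq\Delta_0(\tilde f)$; this inclusion holds because each exponent vector of $g_j(z)x_j$ lies in the translate of the cone $\RR_{\geq 0}^n$ by the exponent vector of $x_j$, which lies in $\supp \tilde f\subseteq\Delta_0(\tilde f)$. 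Your version avoids the induction and any appeal to Lemma~\ref{sigma_props}(iii), so it is more economical and closer to first principles. The only cosmetic point worth flagging is the boundary case $I=\{1,\ldots,s\}$: there the second summand of $\tilde f$ is empty, so (i) is not literally applicable, but the desired conclusion $\sigma(f)\leq\sigma(f)$ is vacuous.
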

\begin{proof}
We proceed by induction on $s-\#I$.
Note that if $s - \#I =0$ then $f_{I}=f$ and there is nothing to prove. 
If $s - \#I \geq 1$ then consider an index $j$ which is not contained in $I$. By the induction hypothesis we know that $\sigma(f) \leq \sigma(f_{I \cup \{j\}}) + s - \#I - 1$. The lemma then follows by
\begin{align*}
  \sigma(f_{I \cup \{j\}}) & =  \sigma ( h + \sum_{i\in I} g_{i} x_{i} + g_j(z_1, \ldots, z_r)x_j  ) \\
  & \leq \sigma ( h + \sum_{i\in I} g_{i} x_{i} + g_j(u_1, \ldots, u_r)x_j ) \\
  & \leq \sigma(f_I) + \sigma(g_j(u_1, \ldots, u_r) x_j) \\
  & \leq \sigma(f_I) + 1,
\end{align*}
where $u_1, \ldots, u_r$ are new variables and the first two inequalities follow from properties (iii) resp.\ (i) stated in Lemma~\ref{sigma_props}.
\end{proof}
 From this we see that in order to prove \eqref{1}, it is sufficient to demonstrate the following estimate:
\begin{theorem}\label{5}
If $h$ is a non-zero polynomial then we have
$$\dim V_{I,d} +\frac{t+d}{2}+\#I-n\leq -\sigma(f_I)-s+\#I$$
for all $I \subseteq \{1, \ldots, s \}$ and $0 \leq d \leq t$.
\end{theorem}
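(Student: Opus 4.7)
The plan is to apply Lemma~\ref{2} to reduce Theorem~\ref{5} to a joint inequality between $\dim V_{I,d}$ and $\sigma(h)$, and then to establish that inequality by combining a codimension bound on $V_{I,d}$ with Lemma~\ref{cluckerslemma} applied to a generic specialization of $h$.  Concretely, Lemma~\ref{2} gives $\sigma(f_I) \leq \sigma(h) + \#I$, so the target inequality $\dim V_{I,d} + \sigma(f_I) \leq r + (t-d)/2$ reduces to
\[
\dim V_{I,d} + \sigma(h) + \#I \leq r + \tfrac{t-d}{2}. \qquad (\ast)
\]

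By a non-degeneracy argument analogous to the one in the proof of Lemma~\ref{cluckerslemma}---applied to the faces of $\Delta_0(f)$ spanned by the monomials $\{g_i(z)x_i : i \in I'\}$ for each non-empty $I' \subseteq I$---the Jacobian matrix $M(z) = (\partial g_i/\partial z_k)_{i \in I, k}$ has rank exactly $\#I$ at every point of $\{g_i = 0 : i \in I\} \cap \overline{\FF}_q^{*r}$, making that scheme a smooth complete intersection of codimension $\#I$.  In particular $\dim V_{I,d} \leq r - \#I$ and $V_{I,d}$ is empty when $\#I > r$.  Separately, for $z_0 \in V_{I,d}$ generic the specialization $h(\cdot, z_0) \in \overline{\FF}_q[y]$ is non-degenerate with respect to its Newton polyhedron (a generic condition), and has critical locus of dimension $d$ in $\AA^t$ by definition of $V_{I,d}$.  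Applying Lemma~\ref{cluckerslemma} to $h(\cdot, z_0)$ yields $\sigma(h(\cdot, z_0)) \leq (t-d)/2$.  A Newton-polyhedron projection argument, using the quasi-homogeneity of $h$ in the $y$-variables (inherited from the setup in Section~\ref{finiteintro}, where each monomial of $h$ has the same weighted $y$-degree $b$), should then relate $\sigma(h)$ to $\sigma(h(\cdot, z_0))$.

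The main obstacle is that $\sigma(h) \leq \sigma(h(\cdot, z_0))$ need not hold for $z_0$ in the special locus $V_{I,d} \subseteq \{g_i = 0 : i \in I\}$: the polyhedron $\Delta_0(h(\cdot, z_0))$ can be strictly smaller than the projection of $\Delta_0(h)$ to the $y$-coordinates, and the critical dimension $d$ on $V_{I,d}$ can exceed the generic critical dimension $d_{\mathrm{gen}}$ of $h(\cdot, z)$ as $z$ ranges over the full variety $\{g_i = 0 : i \in I\}$.  The key to closing $(\ast)$ is a compensation principle: when $d > d_{\mathrm{gen}}$, the set $V_{I,d}$ lies in a proper closed subvariety, so $\dim V_{I,d} < r - \#I$, and the slack in the codimension bound must absorb the gap between $\sigma(h)$ and $(t-d)/2$.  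Making this trade-off precise---via a stratification of $\{g_i = 0 : i \in I\}$ by the dimension of the critical locus of $h(\cdot, z)$, controlled using the quasi-homogeneity of $h$ together with Euler's identity (valid since $\charac\FF_q \nmid b$, i.e.\ $p \notin P$) and the global non-degeneracy of $f$ with respect to all faces of $\Delta_0(f)$---is where I expect the bulk of the technical work of the proof to lie.
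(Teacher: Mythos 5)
Your reduction of Theorem~\ref{5} to the inequality $\dim V_{I,d} + \sigma(f_I) \leq r + \tfrac{t-d}{2}$ is correct, and you have also correctly identified the central difficulty: for a point $z_0$ in the special locus $V_{I,d}$, the Newton polyhedron of $h(\cdot,z_0)$ may shrink, so the naive bound $\sigma(h) \leq \sigma(h(\cdot,z_0)) \leq (t-d)/2$ need not hold, and when $d$ exceeds the generic critical dimension the slack has to come from $\dim V_{I,d} < r - \#I$. But you do not close this gap; you only announce that a ``compensation principle'' via stratification should exist. As stated, the proposal is incomplete precisely where the content of the theorem lies.

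The paper resolves this by an entirely different mechanism, and it is worth comparing. Rather than passing from $\sigma(f_I)$ to $\sigma(h) + \#I$ via Lemma~\ref{2} and then studying specializations of $h$, the paper keeps $\sigma(f_I)$ throughout (this retains the non-degeneracy of $f_I$ as a polynomial in all of its variables, which your reduction discards) and runs an induction on $\dim V_{I,d}$. The base case $\dim V_{I,d}=0$ is handled by a ``weak version'' of the theorem (Lemma~\ref{weak5}) which has $\tfrac{1}{2}\dim V_{I,d}$ in place of $\dim V_{I,d}$; this follows directly from the chain $\dim C_{f_I} \geq \dim V_{I,d} + \#I + d - r$ combined with Lemma~\ref{cluckerslemma} applied to $f_I$, and it coincides with the full statement when $\dim V_{I,d}=0$. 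The inductive step replaces your vague ``stratification plus compensation'' by specializing a single $z$-coordinate generically: Lemma~\ref{3} (proved by a compactness/transfer argument reducing to generic smoothness in characteristic~$0$) guarantees that for all $a$ outside a proper closed set, $f_{I,a}$ remains non-degenerate and $\sigma(f_I) \leq \sigma(f_{I,a})$. Choosing $a$ so that the fiber $V_{I,d,a}$ drops dimension by exactly one and applying the induction hypothesis to $f_{I,a}$ then yields the full inequality. In effect, the paper trades dimension against $\sigma$ one unit at a time rather than attempting a global stratification bound, and the genericity needed to make this legal is isolated in a single auxiliary lemma with its own set of excluded primes. If you wish to salvage your approach you would essentially have to rediscover this inductive step; the global version of inequality $(\ast)$ you propose is not obviously true without it.

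One smaller but non-negligible point: your Jacobian/complete-intersection argument gives $\dim V_{I,d} \leq r - \#I$, but this uses non-degeneracy of $f$ with respect to faces spanned by the $g_i x_i$, and it is only a codimension bound on $\{g_i=0:i\in I\}$, not on the stratum where the critical-locus dimension jumps. To get the extra codimension when $d > d_{\mathrm{gen}}$ you would need an effective degeneration estimate for $\dim C_{h(\cdot,z)}$ on $\{g_i=0\}$, and Euler's identity plus quasi-homogeneity alone do not obviously supply it. The paper sidesteps this entirely by never separating $\dim V_{I,d}$ from the critical-locus behaviour: both are folded into the single quantity $\dim C_{f_I}$ in the weak version, and the inductive specialization then does the rest.
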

\noindent The remainder of this section is devoted to proving this theorem. Note that the condition that $h$ is non-zero implies that $\sigma(f_I)<+\infty$ for each $I\subseteq \{1,\ldots ,s\}$.

\subsection{} \label{sect:weak} We first prove a weaker statement: 
\begin{lemma}[weak version of Theorem~\ref{5}] \label{weak5}
If $h$ is a non-zero polynomial then we have
$$\frac{\dim V_{I,d}}{2} +\frac{t+d}{2}+\#I-n\leq -\sigma(f_I)-s+\#I$$
for all $I \subseteq \{1, \ldots, s \}$ and $0 \leq d \leq t$.
\end{lemma}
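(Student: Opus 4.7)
The plan is to deduce Lemma~\ref{weak5} from Lemma~\ref{cluckerslemma} applied to the face polynomial $f_I \in \FF_q[(x_i)_{i \in I},y,z]$. First I would verify that $f_I$ inherits non-degeneracy from $f$: viewing $f_I$ as living on the coordinate subspace of $\RR^n$ where the $x_i$-coordinates with $i \notin I$ vanish, one checks (using $\supp f \subseteq \RR_{\geq 0}^n$) that $\Delta_0(f_I)$ equals the intersection of $\Delta_0(f)$ with this subspace, and that every face of $\Delta_0(f_I)$ is the first meet locus of a weight vector that assigns large weight to the discarded $x$-coordinates, hence lifts to a face of $\Delta_0(f)$. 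Lemma~\ref{cluckerslemma} then yields $\sigma(f_I) \leq ((\#I + t + r) - \dim C_{f_I})/2$, and after using $n = s + r + t$ the claimed inequality reduces to the geometric estimate
\[ \dim C_{f_I} \;\geq\; \dim V_{I,d} + d + \#I - r. \]

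The structural input for this lower bound is that $h$ is quasi-homogeneous in $y$ with all weights $c_{y_j}$ lying strictly between $0$ and $b$. This rules out any monomial of $h$ of total $y$-degree $\leq 1$ (a term $c(z)y_j$ would force $c_{y_j} = b$), so $h(0,z)$, $(\partial h/\partial y_j)(0,z)$ and $(\partial h/\partial z_k)(0,z)$ all vanish identically in $z$; in particular $y = 0$ is always a critical point of $h(\cdot,z)$, as already noted in Section~\ref{boundreduction}. Moreover the scaling $\lambda \cdot y = (\lambda^{c_{y_j}} y_j)_j$ preserves $C_{h(\cdot,z)}$, and by connectedness of $\GG_m$ it stabilises every top-dimensional irreducible component setwise; letting $\lambda \to 0$ (this uses the strict positivity of all $c_{y_j}$) forces $y = 0$ to lie in the closure of each such component.

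For each fixed $z_0 \in V_{I,d}$ I would then pick an irreducible component $X_{z_0}$ of $C_{h(\cdot,z_0)}$ of dimension $d$ through $0$, and study the auxiliary map
\[ \Phi_{z_0} : X_{z_0} \to \overline{\FF}_q^r, \qquad y \mapsto \bigl((\partial h/\partial z_k)(y,z_0)\bigr)_k \]
together with the linear map $A(z_0) : \overline{\FF}_q^{\#I} \to \overline{\FF}_q^r$, $x \mapsto \bigl(\sum_{i \in I} x_i (\partial g_i/\partial z_k)(z_0)\bigr)_k$ encoding condition~(iii). Writing $E_{z_0}$ for the image of $A(z_0)$, Krull's Hauptidealsatz gives $\dim(X_{z_0} \cap \Phi_{z_0}^{-1}(E_{z_0})) \geq d - r + \mathrm{rank}\,A(z_0)$ (non-emptiness follows from $\Phi_{z_0}(0) = 0 \in E_{z_0}$), while for each such $y$ the system $A(z_0)x = -\Phi_{z_0}(y)$ is consistent with solution space of dimension $\#I - \mathrm{rank}\,A(z_0)$. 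Summing, the fiber of $C_{f_I}$ over $z_0$ has dimension at least $d + \#I - r$ uniformly in $z_0$, and letting $z_0$ sweep $V_{I,d}$ delivers the desired bound (the case $d + \#I - r < 0$ being covered trivially by $\{(0,0,z) : z \in V_{I,d}\} \subseteq C_{f_I}$).

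The step I expect to require the most care is this last globalization: making sure the fiber-wise lower bounds assemble into a single subvariety of $C_{f_I}$ of the claimed dimension, rather than into lower-dimensional pieces that happen to have large fibres over specific $z_0$'s. The cleanest route I see is to work with an irreducible component of the relative critical locus $\{(y,z) \in \overline{\FF}_q^{t+r} : z \in \overline{V_{I,d}},\,(\partial h/\partial y_j)(y,z) = 0 \text{ for all } j\}$ containing the section $z \mapsto (0,z)$ — which, thanks to the quasi-homogeneity argument above, has dimension exactly $\dim V_{I,d} + d$ — and then to apply the generic fiber dimension theorem to the linear-in-$x$ map imposing condition~(iii).
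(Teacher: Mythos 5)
Your proof follows essentially the same route as the paper's: apply Lemma~\ref{cluckerslemma} to $f_I$ (whose non-degeneracy you correctly check by noting that $\Delta_0(f_I)$ is the face $F(a)$ of $\Delta_0(f)$ for $a$ assigning a large weight to the $x_i$ with $i \notin I$) and reduce the claimed inequality to the lower bound $\dim C_{f_I} \geq \dim V_{I,d} + d + \#I - r$. The paper establishes this bound in one stroke: it introduces the loci $W_{I,d} \subseteq W_I$ defined by killing only the $x_i$- and $y_j$-derivatives, observes $\dim W_{I,d} = \dim \tilde{V}_{I,d} + \#I + d$ directly (since the $x$-variables are free and $\tilde{V}_{I,d}$ is defined to have $d$-dimensional $C_{h(\cdot,z)}$-fibers), and applies Krull once globally to the $r$ extra equations $\partial f_I / \partial z_\ell = 0$. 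You instead run Krull fiber-by-fiber over $V_{I,d}$ via the linear map $A(z_0)$ and the map $\Phi_{z_0}$, which is equivalent in content but requires the extra care you flag when re-assembling the fibers. Your observation that the quasi-homogeneity (with all $y$-weights strictly between $0$ and $b$) forces every monomial of $h$ to have $y$-degree $\geq 2$ — hence $(0,0,z) \in C_{f_I}$ and every top-dimensional component of the relative critical locus accumulates at the zero section under $\lambda \to 0$ — is exactly the ingredient that makes the Krull step non-vacuous; the paper uses it implicitly when asserting $\dim C_{f_I} \geq \dim W_I - r$ without verifying that a top-dimensional component of $W_I$ actually meets the cutting hypersurfaces. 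So your proposal is correct and is the paper's argument, with the globalization/non-emptiness point, glossed over in the original, spelled out.
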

\begin{proof} Consider the following algebraic subsets of $\AA^{\#I + t + r}$:
\begin{align*}
C_{f_I} : & \dfrac{\partial f_{I}}{\partial x_{i}}=\dfrac{\partial f_{I}}{\partial y_{j}}=\dfrac{\partial f_{I}}{\partial z_{\ell}}=0 \  \text{(i.e., the affine critical locus of $f_I$),}\\
W_{I,d} : & \dfrac{\partial f_{I}}{\partial x_{i}}=\dfrac{\partial f_{I}}{\partial y_{j}}=0, z \in \tilde{V}_{I,d}, \\
W_I : & \dfrac{\partial f_{I}}{\partial x_{i}}=\dfrac{\partial f_{I}}{\partial y_{j}}=0.
\end{align*}
Here $i \in I$, $1\leq j\leq t$ and $1\leq \ell\leq r$, and 
$$\tilde{V}_{I,d}=\{ \, z\in \AA^r \,| \, \dim C_{h(\cdot, z)}=d, g_{i}(z)=0\Leftrightarrow i\in I \, \} \supseteq V_{I,d}.$$
It is easy to verify that $W_{I,d}$ has dimension $\dim \tilde{V}_{I,d} +\#I+d$ and is contained in $W_I$. On the other hand $\dim C_{f_I} \geq \dim W_I - r$, so we see that 
$$\dim C_{f_I} \geq \dim \tilde{V}_{I,d} +\#I+d-r \geq \dim V_{I,d} + \# I + d - r.$$
Now because $f_I$ is non-degenerate with respect to the faces of its Newton polyhedron at the origin, by Lemma~\ref{cluckerslemma} we have
$$\dfrac{-r-t-\#I+\dim C_{f_I}}{2}\leq -\sigma(f_I).$$
Hence
$$\dfrac{-r-t-\#I+\dim V_{I,d}+\#I+d-r}{2}\leq -\sigma(f_I).$$
From this the lemma follows.
\end{proof}
 This implies that Theorem~\ref{5} is true if $\dim V_{I,d} = 0$. 
In fact we will prove Theorem~\ref{5} by induction on $\dim V_{I,d}$, so this
settles the base case. Note that the theorem is trivial if $\dim V_{I,d} = - \infty$.

\subsection{} \label{genericnondeglemma} The induction will rely on the following auxiliary lemma, which is the source of our enlargement of $P$, which we announced in Section~\ref{finiteintro}.
\begin{lemma} \label{3} 
Let $I \subseteq \{1, \ldots, s\}$. There exists a finite set of primes $P_\emph{comp}$ which only depends on $\supp f_I$, such that the following holds as soon as $\charac \FF_q \notin P_\emph{comp}$. 
For each $a \in \overline{\FF}_q$ let 
\[ f_{I,a} = f_I((x_i)_{i \in I}, y_1, \ldots, y_t, z_1,  \ldots, z_{r-1}, a) \] 
denote the polynomial obtained from $f_I$ by substituting $a$ for $z_r$. Then there exists a non-zero polynomial $\zeta \in \FF_q[z_r]$ such that for
all $a$ for which $\zeta(a) \neq 0$ we have that 
\begin{itemize}
\item[(i)] $\sigma(f_I) \leq \sigma(f_{I,a})$,
\item[(ii)] $g_i(z_1, \ldots, z_{r-1}, a)$ is not identically zero for each $i=1, \ldots, s$,
\item[(iii)] $f_{I,a}$ is non-degenerate with respect to the faces of its Newton polyhedron at the origin.
\end{itemize}
\end{lemma}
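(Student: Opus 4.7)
The plan is to construct $\zeta$ as a product of several non-zero polynomials in $\FF_q[z_r]$, each factor handling one of the conditions (i)--(iii); the set $P_{\text{comp}}$ will be chosen large enough (depending only on $\supp f_I$) to make the arguments below go through. First, regroup the monomials of $f_I$ according to their projection $\pi(\gamma) \in \ZZ_{\geq 0}^{n-1}$ dropping the $z_r$-coordinate, writing $f_I = \sum_{\gamma'} F_{\gamma'}(z_r) \cdot M_{\gamma'}$ with each $F_{\gamma'} \in \FF_q[z_r] \setminus \{0\}$ and $M_{\gamma'}$ a monomial in the remaining variables. If $\zeta$ is divisible by $\prod_{\gamma'} F_{\gamma'}$, then $\supp f_{I,a} = \pi(\supp f_I)$, so $\Delta_0(f_{I,a}) = \pi(\Delta_0(f_I))$, and for every face $\tau'$ of $\Delta_0(f_{I,a})$ there is a corresponding ``vertical'' face $\tau = \{p \in \Delta_0(f_I) : \pi(p) \in \tau'\}$ of $\Delta_0(f_I)$ with $(f_{I,a})_{\tau'} = ((f_I)_\tau)|_{z_r = a}$. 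Property (i) is then immediate, since $(1/\sigma(f_I), \ldots, 1/\sigma(f_I)) \in \Delta_0(f_I)$ projects to the same diagonal point in $\Delta_0(f_{I,a})$. For (ii), write each $g_i$ as $\sum_\beta p_{i,\beta}(z_r)\, z_1^{\beta_1} \cdots z_{r-1}^{\beta_{r-1}}$ and, for each $i$, include some non-zero $p_{i,\beta}$ as a factor of $\zeta$.

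The crux is (iii). It suffices to show that for each of the finitely many vertical faces $\tau$, the set
\[ E_\tau := \bigl\{\, a \in \overline{\FF}_q \,\bigm|\, ((f_I)_\tau)|_{z_r=a} \text{ has a critical point in } \overline{\FF}_q^{\ast(n-1)} \,\bigr\} \]
is finite, so that a polynomial over $\FF_q$ vanishing on the (Frobenius-stable) union $\bigcup_\tau E_\tau$ can be added as a final factor of $\zeta$. Introduce
\[ Z_\tau := \bigl\{\, w \in \overline{\FF}_q^{\ast n} \,\bigm|\, \partial (f_I)_\tau/\partial v (w) = 0 \text{ for all } v \neq z_r \,\bigr\}, \]
so that $E_\tau \subseteq \pi_{z_r}(Z_\tau)$; it then suffices to prove that this image is finite. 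Because $(f_I)_\tau$ is quasi-homogeneous in the $(x,y)$-variables of positive degree $b$ with all $z$-variables of weight $0$, Euler's identity
\[ \sum_{v \neq z_r} c_v w_v \, \frac{\partial (f_I)_\tau}{\partial v} = b \cdot (f_I)_\tau, \]
combined with $\charac \FF_q \nmid b$ (which holds since $\charac \FF_q \notin P$), forces $(f_I)_\tau$ to vanish identically on $Z_\tau$.

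Now suppose for contradiction that some irreducible component $Z_0 \subseteq Z_\tau$ dominates $\AA^1$ under $\pi_{z_r}$. After enlarging $P_{\text{comp}}$ by a finite set of primes depending only on $\supp f_I$, chosen so that the projection $Z_0 \to \AA^1$ cannot be purely inseparable, pick a smooth $\overline{\FF}_q$-point $p$ of $Z_0$ at which $d\pi_{z_r}$ is surjective, and choose $\xi \in T_p Z_0$ with $\xi_{z_r} \neq 0$. Since $(f_I)_\tau$ vanishes identically on $Z_0$, the differential $d(f_I)_\tau$ kills $\xi$; on the other hand,
\[ d(f_I)_\tau(\xi) = \sum_v \frac{\partial (f_I)_\tau}{\partial v}(p) \cdot \xi_v = \frac{\partial (f_I)_\tau}{\partial z_r}(p) \cdot \xi_{z_r}, \]
forcing $\partial (f_I)_\tau/\partial z_r(p) = 0$. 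Together with $p \in Z_\tau$ this exhibits $p$ as a critical point of $(f_I)_\tau$ inside $\overline{\FF}_q^{\ast n}$, contradicting the non-degeneracy of $f_I$ with respect to $\tau$. Hence no component of $Z_\tau$ dominates $\AA^1$, $\pi_{z_r}(Z_\tau)$ is finite, and the construction of $\zeta$ can be completed as indicated.

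The main obstacle is the separability input used in the last paragraph: in positive characteristic a dominant morphism $Z_0 \to \AA^1$ could factor through the Frobenius, so that $d\pi_{z_r}$ vanishes identically and the tangent-vector argument collapses. It is precisely in ruling this out that $P_{\text{comp}}$ may have to be enlarged beyond the primes dividing $b$, with the extra primes controlled purely by combinatorial data of $\supp f_I$.
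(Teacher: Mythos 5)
Your proof takes a genuinely different route from the paper's. The paper first reduces (as you do) to showing that for generic $a$ the projected polyhedron is the Newton polyhedron and that $f_{I,a}$ is weakly non-degenerate, but then observes that this assertion (for a fixed $\supp f_I$) is a first-order sentence, proves it over fields of characteristic zero using Hartshorne's generic smoothness theorem applied to the fibration $\{f_I=0\}\cap\overline k^{\ast n}\to\overline k$, and invokes the compactness theorem (Robinson's principle) to deduce that it holds in all but finitely many characteristics. The finite exceptional set $P_{\mathrm{comp}}$ is thus produced abstractly, with no need to control separability at all. You instead attempt a direct positive-characteristic argument: for each ``vertical'' face $\tau$ you use the weighted Euler identity (with the $z$-variables carrying weight $0$ and $\charac\FF_q\nmid b$) to force $(f_I)_\tau$ to vanish on $Z_\tau$, and then a tangent-vector computation at a smooth point of a dominating component $Z_0$ to produce a critical point of $(f_I)_\tau$ in $\overline\FF_q^{\ast n}$, contradicting non-degeneracy. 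The reduction, the identification of vertical faces with preimages under $\pi_r$, and the Euler step are all correct.

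The gap is exactly the one you flag in your final paragraph: you assume, but do not prove, that after discarding a finite set of primes \emph{depending only on $\supp f_I$} the dominant projection $Z_0\to\AA^1$ cannot be generically inseparable. This is not automatic: $Z_0$ is an irreducible component of $Z_\tau$, and the geometry of $Z_\tau$ (hence the possible inseparability of the coordinate function $z_r|_{Z_0}$) depends on the actual coefficients of $f_I$, not just on its support. Without control on this, the choice of smooth point $p$ with $d\pi_{z_r}|_{T_pZ_0}$ surjective may be impossible, and the entire contradiction collapses. To close the gap you would need an effective bound --- for instance, a B\'ezout-type estimate $\deg Z_0\le D$ with $D$ determined by $\supp f_I$, followed by cutting $Z_0$ down to a curve with a generic linear space (Bertini over $\overline\FF_q$) and noting that a purely inseparable dominant map of curves has degree divisible by $p$, so that $p>D$ forces separability. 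None of this is in the proposal, so as written the proof is incomplete; the paper's compactness argument is precisely what lets one avoid having to produce such a bound.
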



\begin{proof}
Consider the following variation on the above assertion: 
\begin{quote} \emph{There exists a non-zero polynomial $\zeta \in \FF_q[z_r]$ such that for all $a \in \overline{\FF}_q$ for which $\zeta(a) \neq 0$ we have that
\begin{itemize}
  \item[(a)] each non-zero coefficient $c(z_r)$ of $f_I$, when viewed as a polynomial in $(x_i)_{i \in I}, y, z_1, \ldots, z_{r-1}$ over $\FF_q[z_r]$, satisfies $c(a) \neq 0$,
  \item[(b)] $f_{I,a}$ is weakly non-degenerate with respect to the faces of $\Delta_0(f_{I,a})$. 
\end{itemize}}
\end{quote}
These two properties imply (i--iii). Indeed, it is obvious that (a) implies (ii), while it also ensures that the Newton polyhedron of $f_{I,a}$ at the origin equals the image of $\Delta_0(f_I)$ under the projection $\pi_r : \RR^{\#I + t + r} \rightarrow \RR^{\#I + t + r - 1}$
 corresponding to dropping the last coordinate. In particular we have that
 \[ \Delta_0(f_I) \subseteq \Delta_0(f_{I,a}) \times \RR_{\geq 0} \]
 which implies (i). Finally, note that $f_I$ is supported on the hyperplane $H_I \subseteq \RR^{\#I + t + r}$
 defined by
\[ \sum_{j \in I} c_ji_j + c_{s + 1} i_{s + 1} + \ldots + c_{s + t} i_{s + t} = b,  \]
so that
 $f_{I,a}$ is supported on the hyperplane $\pi_r(H_I) \subseteq \RR^{\#I + t + r - 1}$, which is defined by the same equation.
Therefore property (b) implies (iii), because it suffices to verify non-degeneracy with respect to the faces $\sigma \subseteq \Delta(f_{I,a})$ which are contained in $\pi_r(H_I)$, and as discussed in Section~\ref{nondegversions} the notions of non-degeneracy and weak non-degeneracy coincide with respect to such faces; here we used that $\charac \FF_q \notin P$.

Now it is easy to see that for each fixed choice of $\supp f_I$ the 
foregoing variation amounts to the validity of a first-order sentence. Therefore, if we can prove the direct analogue of this variation in characteristic zero, then by the compactness theorem (Robinson's principle) we know that it must be true in all characteristics outside a finite set $P_\text{comp}$, from which the desired conclusion follows.

So assume that we are working over a field $k$ of characteristic $0$. That is, we consider a polynomial $f = h + g_1x_1 + \ldots + g_sx_s \in k[x,y,z]$ satisfying the analogues of the properties listed in Section~\ref{finiteintro} and at the beginning of Section~\ref{boundreduction}. As before we let $f_I = h + \sum_{i\in I} g_ix_i$ and for each $a \in \overline{k}$ we write $f_{I,a}$ for the polynomial obtained from $f_I$ by substituting $a$ for $z_r$. 
Our task is to show that properties (a) and (b) hold for all but finitely many $a \in \overline{k}$. Since for (a) this is immediate, from now on we assume that property (a) is satisfied, so that $\Delta_0(f_{I,a}) = \pi_r (\Delta_0(f_I))$. 
Our task is to prove that $f_{I,a}$ is weakly non-degenerate with respect to its Newton polyhedron at the origin, except possibly for another finite number of values of $a$. We note that a very similar problem was recently tackled by Esterov, Lemahieu and Takeuchi~\cite[Prop.\,7.1]{lemahieu}, but their setting is more difficult since they assume weak non-degeneracy with respect to the compact faces, only.

 Every face $\sigma \subseteq \Delta_0(f_{I,a})$ arises as the projection along $\pi_r$ of a face $\tau \subseteq \Delta_0(f_I)$ of the same codimension; see Figure~\ref{figures} for some elementary examples. 
\begin{figure}[h]
\centering
\begin{tikzpicture}[scale=0.4]
  \draw[->] (0,0)--(5,0);
  \draw[->] (0,0)--(0,5);
  \draw[->] (0,0)--(-2,-3);
  \draw[white,pattern=north west lines, pattern color=gray] (-1,3.5)--(-1,1.5)--(-0.2,0.5)--(1,0.3)--(2,0.8)--(3,1.5)--(3,5)--(-1,3.5);
  \draw (-1,3.5)--(-1,1.5)--(-0.2,0.5)--(1,0.3)--(2,0.8)--(3,1.5)--(3,5);
  \draw[dotted](-1,3.5)--(-1,-1.5);
  \draw[dotted](3,1.5)--(3,0);
  \draw[dashed](-1,-1.5)--(3,0);
  \node at (1,-1.2) {$\sigma$};
  \node at (2,3) {$\tau$};
  \node at (4.5,0.5) {$y_1$};
  \node at (-2.6,-2.6) {$y_2$};
  \node at (-0.6,4.5) {$z_1$};   
\end{tikzpicture}
\qquad \qquad 
\begin{tikzpicture}[scale=0.4]
  \draw[->] (0,0)--(5,0);
  \draw[->] (0,0)--(0,5);
  \draw[->] (0,0)--(-2,-3);
  \draw[white,pattern=north west lines, pattern color=gray] (-0.5,-2.9)--(0.5,-1.4)--(1.5,2.1) -- (1.5,5) -- (-0.5,2) -- (-0.5,-2.9);
  \draw (-0.5,-2.9) -- (0.5,-1.4) -- (1.5,2.1) -- (1.5,5);
  \draw[dotted](1.5,0)--(1.5,2.1);
  \draw[dashed](-0.5,-3)--(1.5,0);
  \node at (1.5,-0.8) {$\sigma$};
  \node at (0.8,2) {$\tau$};
  \node at (4.5,0.5) {$y_1$};
  \node at (-2.6,-2.6) {$z_1$};
  \node at (-0.6,4.5) {$z_2$};   
\end{tikzpicture}
\caption{Two examples of a face $\tau$ along with its projection $\sigma$}
\label{figures}
\end{figure}
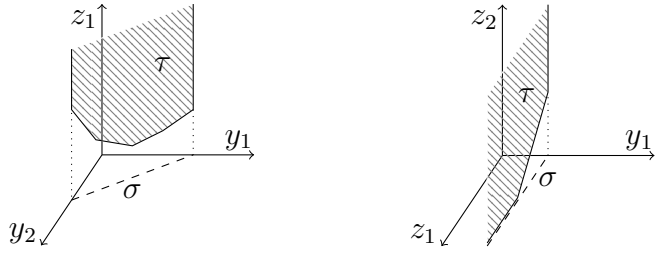
We will concentrate on the face $\sigma = \Delta_0(f_{I,a}) \cap \pi_r(H_{I})$, which is the projection of $\tau = \Delta_0(f_I) \cap H_I$. One verifies that $f_{I,a}$ is weakly non-degenerate with respect to $\sigma$ if and only if the fiber of the projection
\[ p_r : \{ \, ((x_i)_{i \in I}, y, z ) \in \overline{k}^{\ast \#I + t + r} \, | \, f_I((x_i)_{i \in I}, y, z) = 0 \, \} \rightarrow \overline{k} \]
on the last coordinate is smooth. 
Note that the source is smooth, thanks to the non-degeneracy of $f_I$ with respect to $\tau$.
By the generic smoothness theorem, which is only available in characteristic zero, this implies that all but finitely many fibers of $p_r$ are smooth. (More precisely, this conclusion follows by
applying~\cite[Cor.\,III.10.7]{hartshorne} to $p_r|_X$ for each irreducible component $X$ of the source.) We find that $f_{I,a}$ is weakly non-degenerate with respect to $\sigma$ for all but finitely many $a \in \overline{k}$. Repeating this argument for all other faces $\sigma \subseteq \Delta_0(f_{I,a})$ then leads to the desired conclusion.
\end{proof}

As announced in Section~\ref{finiteintro}, we now enlarge the set $P$ of excluded primes by adjoining the sets $P_\text{comp}$ arising from multiple applications of Lemma~\ref{3}. Indeed, the lemma will be applied for all possible choices of $I \subseteq \{1, \ldots, s \}$, but it will also be applied recursively to the polynomials $f_{I,a}$. Nevertheless, one easily sees that for a fixed choice of $\supp f$ there are only a finite number of supports appearing: therefore $P$ remains finite.

\subsection{} 
We are now ready to prove Theorem~\ref{5}.
As announced, we will proceed by induction on $\dim V_{I,d}$, where the base case
$\dim V_{I,d} = 0$ was taken care of in Section~\ref{sect:weak}. So assume that $\dim V_{I,d} \geq 1$ and that Theorem~\ref{5} holds for all smaller dimensions.

If $r = 1$ then necessarily $\dim V_{I,d} = 1$, and because all $g_i(z) = g_i(z_1)$ are non-zero we have that $I = \emptyset$. In particular $f_I = h$. Now by Lemma~\ref{3} we can find an $a \in V_{I,d}$ such that $f_{I,a}$ is non-degenerate with respect to the faces of its
Newton polyhedron at the origin, and such that $\sigma(f_I) \leq \sigma(f_{I,a})$. It is possible that we need to pick $a$ from $\overline{\FF}_q \setminus \FF_q$, but extending the coefficient field is of no harm to the statement we are trying to prove (or in other words, we can assume without loss of generality that $\# \FF_q$ exceeds $\deg \zeta$). Our choice of $a$ implies that $C_{f_{I,a}}$ has dimension $d$, so by Lemma~\ref{cluckerslemma} we have that
\[ \dfrac{-t + d}{2} \leq - \sigma(f_{I,a}) \leq \sigma(f_I), \]
which using $r= \dim V_{I,d} = 1$ and $\#I = 0$ implies that
\[ \dim V_{I,d} + \dfrac{t+d}{2} + \#I - n \leq -\sigma(f_I) - s - \#I,\]
as wanted.

If $r \geq 2$ then we can proceed similarly. Indeed,
since $\dim V_{I,d} \geq 1$ there exists at least one coordinate $z_j$ such that the image of the projection $\pi_{j} : V_{I,d} \to \AA^{1}$ onto the $z_j$-coordinate is Zariski dense; we can suppose that $j=r$. Choose $a \in \pi_{r}(V_{I,d})$, again over an extension of $\FF_q$ if needed, such that the fiber $V_{I,d,a}$ of $\pi_r$ over $a$ is of codimension $1$ in $V_{I,d}$. 
By Lemma~\ref{3} we can moreover assume that $f_{I,a}$ is non-degenerate with respect to the faces of its
Newton polyhedron at the origin and that $\sigma(f_I) \leq \sigma(f_{I,a})$.
By applying our induction hypothesis we find that
$$\dim V_{I,d,a} +\dfrac{t+d}{2}+\#I-n+1 \leq -\sigma(f_{I,a})-s+\#I.$$
But we know that $$\dim V_{I,d,a} =\dim V_{I,d}-1,$$
therefore 
$$\dim V_{I,d}+\dfrac{t+d}{2}+\#I-n\leq -\sigma(f_{I,a})-s+\#I\leq -\sigma(f_I)-s+\#I,$$
as desired.

\section{Denef and Hoornaert's conjecture} \label{sect:denef_hoornaert}

\subsection{} This final section is devoted to a proof of Denef and Hoornaert's conjecture:
\begin{theorem} \label{dhconjecture}
Let $f \in \ZZ_K[x]$ be a non-zero polynomial such that $f(0)=0$ and assume that
it is weakly non-degenerate with respect to the faces of its Newton polyhedron at the origin. Let $\sigma$ and $\kappa$ be as in the statement of Theorem~\ref{maintheorem}. Then
\[
 \lim_{s \rightarrow -\sigma} (N\mathfrak{p}^{s + \sigma} - 1)^{\kappa + \delta_{\sigma, 1}} Z_{f,\mathfrak{p}}(s) = O(N\mathfrak{p}^{1 - \max \{1,\sigma\}})
\]
as $\mathfrak{p}$ varies over all non-zero prime ideals of $\ZZ_K$.  
\end{theorem}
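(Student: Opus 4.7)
The plan is to substitute the new exponential-sum bound of Theorem~\ref{finitefieldexpsum} into the explicit formula of Denef--Hoornaert~\cite{denefhoornaert} for $Z_{f,\mathfrak{p}}(s)$. Recall that this formula fixes a simplicial decomposition $\{\Delta\}$ of $\RR_{\geq 0}^n$ subordinate to the polyhedral structure of $\Delta_0(f)$ and writes $Z_{f,\mathfrak{p}}(s)$ as a finite sum, indexed by these cones, of products of rational functions in $N\mathfrak{p}^{-s}$, with denominators of the form $\prod_i \bigl(N\mathfrak{p}^{\nu(a^{(i)}) + N(a^{(i)})s} - 1\bigr)$ where $a^{(1)},\ldots,a^{(j)}$ are the primitive generators of the cone $\Delta$, times a finite-field character sum $L_\tau(\mathfrak{p})$ supported on the face $\tau=\bigcap_i F(a^{(i)})$.

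First I would determine which cones contribute to the pole at $s=-\sigma$ of order $\kappa + \delta_{\sigma,1}$. A generator $a^{(i)}$ produces a pole at $s=-\sigma$ exactly when $\nu(a^{(i)})/N(a^{(i)})=\sigma$, and the full pole order $\kappa$ (or $\kappa+1$ when $\sigma=1$, the extra factor coming from the trivial pole at $s=-1$) is attained precisely by those cones whose generators all satisfy this equality, possibly plus one generator with $\nu/N=1$ in the degenerate case. Such cones correspond to the faces of $\Delta_0(f)$ of codimension $\kappa$ that contain $(1/\sigma,\ldots,1/\sigma)$. After multiplying by $(N\mathfrak{p}^{s+\sigma}-1)^{\kappa+\delta_{\sigma,1}}$ and passing to the limit $s\to-\sigma$, using that each ratio $(N\mathfrak{p}^{s+\sigma}-1)/(N\mathfrak{p}^{\nu(a^{(i)})+N(a^{(i)})s}-1)$ tends to $1/N(a^{(i)})$, one expresses the desired limit as a finite $\QQ$-linear combination, with coefficients depending only on $\Delta_0(f)$, of terms $L_\tau(-\sigma,\mathfrak{p})$ multiplied by bounded factors of shape $(N\mathfrak{p}-1)^{n-j}/N\mathfrak{p}^{n}$.

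The key input is now the estimation of each $L_\tau(-\sigma,\mathfrak{p})$. By construction $\tau$ lies in a hyperplane $a\cdot x=N(a)$ that does not pass through the origin (since $\sigma>0$ forces $N(a)>0$) and whose normal vector $a$ belongs to $\RR_{\geq 0}^n$, so Theorem~\ref{finitefieldexpsum} applies and gives
\[
|L_\tau(-\sigma,\mathfrak{p})|=\left|\frac{1}{(N\mathfrak{p}-1)^{n}}\sum_{x\in\FF_\mathfrak{p}^{\ast n}}\varphi\bigl(\bar f_\tau(x)\bigr)\right|<c\,N\mathfrak{p}^{-\sigma(\bar f_\tau)}.
\]
Because $(1/\sigma,\ldots,1/\sigma)\in\tau$ realizes the minimum of $x\cdot\vec1$ on $\Delta_0(\bar f_\tau)=\tau+\RR_{\geq 0}^n$, we have $\sigma(\bar f_\tau)=\sigma$. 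Collecting the powers of $N\mathfrak{p}$ arising from the Denef--Hoornaert volume factors and from the $N\mathfrak{p}^{-s}=N\mathfrak{p}^\sigma$-contributions in the numerator of the relevant cone terms, one ends up with the bound $O(N\mathfrak{p}^{1-\max\{1,\sigma\}})$; the extra factor of $N\mathfrak{p}$ present when $\sigma\geq 1$ comes from those numerator terms, whereas when $\sigma<1$ it is absent and one is left with an $O(1)$ contribution.

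The main obstacle is two-fold. Combinatorially, one must carefully isolate the contribution of the trivial pole at $s=-1$ from the non-trivial pole at $s=-\sigma$ when $\sigma=1$, making sure the prefactor $(N\mathfrak{p}^{s+\sigma}-1)^{\kappa+\delta_{\sigma,1}}$ simultaneously cancels both and that the combinatorial residues do not introduce undesired powers of $N\mathfrak{p}$. Analytically, Denef and Hoornaert formulate the conjecture under their a priori weaker non-degeneracy, whereas Theorem~\ref{finitefieldexpsum} presumes the strong version. This gap is precisely what Section~\ref{sect:weakernotion_consequences} is designed to bridge: together with the coincidence of the two notions of non-degeneracy on faces supported in a hyperplane off the origin with integer normal coprime to $\charac\FF_\mathfrak{p}$ (Section~\ref{nondegversions}), it reduces the weak case to the strong one, consistent with the ``$N\mathfrak{p}$ sufficiently large'' hypothesis hidden behind the asymptotic $O(\cdot)$.
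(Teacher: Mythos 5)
Your overall strategy matches the paper's: substitute the exponential sum estimate of Theorem~\ref{finitefieldexpsum} into the Denef--Hoornaert explicit formula, bridge the weak/strong non-degeneracy gap via the observations of Section~\ref{nondegversions}, and split into the cases $\sigma<1$, $\sigma>1$, $\sigma=1$. However, your sketch contains two genuine gaps that prevent the argument from closing.

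First, the identity $|L_\tau(-\sigma,\mathfrak{p})|=\left|\frac{1}{(N\mathfrak{p}-1)^n}\sum_{x\in\FF_\mathfrak{p}^{\ast n}}\varphi(\bar f_\tau(x))\right|$ is incorrect. The Denef--Hoornaert coefficient is
\[ L_\tau(s) = N\mathfrak{p}^{-n}\left((N\mathfrak{p}-1)^n - \#\{x\in\FF_\mathfrak{p}^{\ast n}:\bar f_\tau(x)=0\}\cdot\frac{N\mathfrak{p}^{s+1}-N\mathfrak{p}}{N\mathfrak{p}^{s+1}-1}\right), \]
a rational function of $N\mathfrak{p}^{-s}$ built from the \emph{point count} $\#\{x:\bar f_\tau(x)=0\}$, not the exponential sum. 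To apply Theorem~\ref{finitefieldexpsum} one must first pass from the point count to the exponential sum by orthogonality of characters:
\[ N\mathfrak{p}\cdot\#\{x\in\FF_\mathfrak{p}^{\ast n}:\bar f_\tau(x)=0\} = (N\mathfrak{p}-1)^n + \sum_{t\in\FF_\mathfrak{p}^\ast}\sum_{x\in\FF_\mathfrak{p}^{\ast n}}\varphi(t\bar f_\tau(x)), \]
which gives $\#\{x:\bar f_\tau(x)=0\}=(N\mathfrak{p}-1)^n/N\mathfrak{p}+O(N\mathfrak{p}^{n-\sigma(f_\tau)})$. This conversion is exactly the paper's Lemma~\ref{plugin1} and is indispensable; your version short-circuits it and asserts an identity that does not hold.

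Second, your assertion that $\sigma(\bar f_\tau)=\sigma$ for every face $\tau$ contributing to the residue is not justified and, in the paper's face-form version, simply fails: the relevant faces are the subfaces $\tau\subseteq\tau_0$ (the smallest face containing $(1/\sigma,\ldots,1/\sigma)$), for which $\sigma(f_\tau)$ can be strictly smaller than $\sigma$. The paper resolves the resulting mismatch with Cluckers' combinatorial inequality $\nu(a)\geq\sigma N(a)+\sigma-\sigma(f_\tau)$ for all $a$ with $F(a)=\tau$ [Lemma~\ref{plugin2}], which shows $\lim_{s\to-\sigma}(N\mathfrak{p}^{s+\sigma}-1)^\kappa S_\tau(s)=O(N\mathfrak{p}^{\sigma(f_\tau)-\sigma})$ and thereby cancels the $N\mathfrak{p}^{\sigma-\sigma(f_\tau)}$ surplus coming from $L_\tau(-\sigma)$. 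Your sketch never accounts for this balancing; without it the error terms do not sum to the claimed $O(N\mathfrak{p}^{1-\max\{1,\sigma\}})$.
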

\noindent By excluding finitely many prime ideals we can assume that $f$ is non-degenerate at $\mathfrak{p}$. Under this assumption Denef and Hoornaert proved the explicit formula
\begin{equation*} 
  Z_{f, \mathfrak{p}}(s) = L_{\Delta_0(f)}(s) + \sum_{ \substack{ \tau \text{ proper face} \\ \text{of } \Delta_0(f)}} L_\tau(s) S_\tau(s)
\end{equation*}
where
\[ L_\tau(s) = N \mathfrak{p}^{-n} \left( (N\mathfrak{p} - 1)^n - \# \{ \, x \in \FF_\mathfrak{p}^\ast \, | \, \overline{f}_\tau(x) = 0 \, \} \cdot \frac{N\mathfrak{p}^{s+1} - N\mathfrak{p}}{N\mathfrak{p}^{s+1} - 1} \right) \]
and
\[ S_\tau(s) = \sum_{ \substack{ a \in \ZZ_{\geq 0}^n \\ \text{\,s.t.\,} F(a) = \tau  } }  N \mathfrak{p}^{-\nu(a) - N(a) s}, \]
see~\cite[Thm.\,4.2]{denefhoornaert}. Moreover they showed that
\[ \lim_{s \rightarrow - \sigma} (N\mathfrak{p}^{s + \sigma} - 1)^\kappa S_\tau(s) = 0\]
as soon as $\tau \not \subseteq \tau_0$, where
$\tau_0 \subseteq \Delta_0(f)$ denotes the smallest face containing $(1/\sigma, \ldots, 1/\sigma)$, see~\cite[Lem.\,5.4]{denefhoornaert}. Thus it suffices to prove that
\begin{equation} \label{dhreduction} \lim_{s \rightarrow -\sigma} (N\mathfrak{p}^{s + \sigma} - 1)^{\kappa + \delta_{\sigma, 1}} L_\tau(s) S_\tau(s) = O(N\mathfrak{p}^{1 - \max \{1,\sigma\}})
\end{equation}
for all subfaces $\tau$ of $\tau_0$. We remark that Denef and Hoornaert restict their discussion to $K = \QQ$, but everything readily generalizes to arbitrary number fields.

\subsection{} \label{sect:weakernotion_consequences} The face $\tau_0$ is contained in a hyperplane which does not contain the origin and which has a normal vector in $\RR_{\geq 0}^n$. Necessarily the same is true for all subfaces $\tau \subseteq \tau_0$. This has an important consequence related to the subtlety mentioned in Section~\ref{denefhoornaertintro}. Namely, Denef and Hoornaert work under the weak non-degeneracy assumption discussed in Section~\ref{nondegversions} (non-existence of singular points versus non-existence of critical points) and they also conjecture Theorem~\ref{dhconjecture} in terms of this weaker hypothesis. But
as mentioned at the end of Section~\ref{nondegversions}, over fields of large enough characteristic, both non-degeneracy notions coincide with respect to faces that are contained in a hyperplane not passing through the origin.
Therefore there is no ambiguity: proving~\eqref{dhreduction} under the assumption that $f_{\tau_0}$ satisfies our stronger non-degeneracy assumption is sufficient to conclude the Denef--Hoornaert conjecture in its full strength.

\subsection{} The proof works by explicit computation along the lines of Denef and Hoornaert, making a distinction between the cases $\sigma < 1$, $\sigma > 1$ and $\sigma = 1$. We make use of two new plug-ins. One of these plug-ins is our finite field exponential sum estimate stated in Theorem~\ref{finitefieldexpsum}, which implies:
\begin{lemma} \label{plugin1}
$ N\mathfrak{p} \cdot \# \{ \, x \in \FF_\mathfrak{p}^\ast \, | \, \overline{f}_\tau(x) = 0 \, \} = (N\mathfrak{p} - 1)^n + O(N\mathfrak{p}^{n + 1 - \sigma(f_\tau)})$.
\end{lemma}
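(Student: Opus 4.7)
The plan is to apply the standard additive character trick to convert the point count on $\{\overline{f}_\tau = 0\}$ into an exponential sum, and then invoke Theorem~\ref{finitefieldexpsum}. Fix any non-trivial additive character $\varphi : \FF_\mathfrak{p} \to \CC^\ast$. Using the orthogonality relation
\[
\sum_{t \in \FF_\mathfrak{p}} \varphi(t u) = \begin{cases} N\mathfrak{p} & \text{if } u = 0, \\ 0 & \text{otherwise}, \end{cases}
\]
one would rewrite the left-hand side as
\[
N\mathfrak{p} \cdot \#\{x \in \FF_\mathfrak{p}^{\ast n} \mid \overline{f}_\tau(x) = 0\} \;=\; (N\mathfrak{p}-1)^n \;+\; \sum_{t \in \FF_\mathfrak{p}^\ast} \sum_{x \in \FF_\mathfrak{p}^{\ast n}} \varphi(t \overline{f}_\tau(x)),
\]
so that everything reduces to showing that the double sum on the right is $O(N\mathfrak{p}^{n+1-\sigma(f_\tau)})$.

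To this end I would verify that the hypotheses of Theorem~\ref{finitefieldexpsum} are met for $\overline{f}_\tau$ paired with each of the non-trivial characters $\varphi_t := \varphi(t \cdot)$, $t \in \FF_\mathfrak{p}^\ast$. Since $\tau \subseteq \tau_0$ and $\tau_0$ is contained in a hyperplane not through the origin with a normal vector in $\RR_{\geq 0}^n$ (as noted at the start of Section~\ref{sect:weakernotion_consequences}), the same property holds for $\supp f_\tau \subseteq \tau$. The non-degeneracy of $f_\tau$ with respect to the faces of $\Delta_0(f_\tau) = \tau + \RR_{\geq 0}^n$ is inherited from the non-degeneracy of $f$: each face of $\Delta_0(f_\tau)$ meets $\supp f_\tau$ in a subface of $\tau$, which is itself a face of $\Delta_0(f)$, so the corresponding initial form of $f_\tau$ coincides with the corresponding initial form of $f$. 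Finally, the invariant called $\sigma$ in Theorem~\ref{finitefieldexpsum} agrees with $\sigma(f_\tau)$ by definition.

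Applying Theorem~\ref{finitefieldexpsum} uniformly in $t$ then yields
\[
\left| \sum_{x \in \FF_\mathfrak{p}^{\ast n}} \varphi(t \overline{f}_\tau(x)) \right| \;<\; c \,(N\mathfrak{p}-1)^n \, N\mathfrak{p}^{-\sigma(f_\tau)}
\]
as soon as $\charac \FF_\mathfrak{p}$ lies outside a suitable finite set of primes (which we absorb into our original exclusion set) and $N\mathfrak{p}$ is sufficiently large. Summing this bound over the $N\mathfrak{p}-1$ choices of $t \in \FF_\mathfrak{p}^\ast$ yields the required $O(N\mathfrak{p}^{n+1-\sigma(f_\tau)})$ estimate, and substituting back into the displayed identity concludes the proof. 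The only potential obstacle is the bookkeeping verification that $\overline{f}_\tau$ indeed satisfies the support and non-degeneracy hypotheses of Theorem~\ref{finitefieldexpsum}; all of the analytic content is carried by that theorem itself.
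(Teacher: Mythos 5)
Your proof is correct and follows essentially the same route as the paper: same orthogonality reduction, same application of Theorem~\ref{finitefieldexpsum} to each character $\varphi(t\,\cdot)$, same summation over $t\in\FF_\mathfrak{p}^\ast$. The extra paragraph verifying that $\overline f_\tau$ satisfies the support and non-degeneracy hypotheses is spelled out in more detail than in the paper, but it is entirely in line with the paper's intent.
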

\begin{proof}
Let $\varphi : \FF_\mathfrak{p} \rightarrow \CC^\ast$ be a non-trivial additive character, then
\[
  N\mathfrak{p} \cdot \# \{ \, x \in \FF_\mathfrak{p}^\ast \, | \, \overline{f}_\tau(x) = 0 \, \}  = (N\mathfrak{p}-1)^n + \sum_{t \in \FF_\mathfrak{p}^\ast} \sum_{x \in \FF_\mathfrak{p}^{\ast n}} \varphi(t\overline{f}_\tau(x))
\] 
and because $\tau$ is contained in a hyperplane which does not contain the origin and which has a normal vector in $\RR_{\geq 0}^n$, we can use Theorem~\ref{finitefieldexpsum} to find that
\[ \left| \sum_{t \in \FF_\mathfrak{p}^\ast} \sum_{x \in \FF_\mathfrak{p}^{\ast n}} \varphi(t\overline{f}_\tau(x)) \right| \leq \sum_{t \in \mathfrak{p}^\ast} c (N \mathfrak{p}-1)^n N\mathfrak{p}^{-\sigma(f_\tau)} \]
for some constant $c \in \RR_{> 0}$ that does not depend on $\mathfrak{p}$. The lemma follows.
\end{proof}

The second plug-in is a combinatorial inequality 
due to Cluckers~\cite[Thm.\,4.1]{cluckersduke}, which says that for any face $\tau \subseteq \Delta_0(f)$ and all $a \in \RR_{\geq 0}^n$ such that $F(a) = \tau$ we have $\nu(a) \geq \sigma N(a) + \sigma - \sigma(f_\tau)$. This leads to the following statement: 

\begin{lemma} \label{plugin2}
If $\tau \subseteq \tau_0$ then
\[ \lim_{s \rightarrow -\sigma} (N\mathfrak{p}^{s + \sigma} - 1)^\kappa S_\tau(s) = O(N\mathfrak{p}^{\sigma(f_\tau) - \sigma}). \] 
\end{lemma}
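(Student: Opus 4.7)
The plan is to analyze $S_\tau(s)$ via the classical decomposition, in the spirit of Denef and Hoornaert, of the rational cone $\overline{\Delta_\tau} := \overline{\{\, a \in \RR_{\geq 0}^n : F(a) = \tau \,\}}$ into simplicial rational subcones. First I would choose a triangulation $\{\Delta^{(j)}\}_j$ of $\overline{\Delta_\tau}$ whose rays are among the primitive rays of $\overline{\Delta_\tau}$, and which is compatible with the face $\overline{\Delta_{\tau_0}}$: since $\tau \subseteq \tau_0$, the cone $\overline{\Delta_{\tau_0}}$ is a face of $\overline{\Delta_\tau}$ of dimension $\kappa$, and I would arrange it to appear as a union of faces of the $\Delta^{(j)}$'s. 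For each $\Delta^{(j)}$ with primitive rays $a_1^{(j)}, \ldots, a_{\codim \tau}^{(j)}$, standard bookkeeping expresses
\[ S_\tau(s) \;=\; \sum_j \frac{\Sigma_j(s)}{\prod_i \bigl(1 - N\mathfrak{p}^{-\nu(a_i^{(j)}) - N(a_i^{(j)})s}\bigr)}, \quad \Sigma_j(s) = \sum_{h \in P_j^\circ \cap \ZZ^n} N\mathfrak{p}^{-\nu(h) - N(h)s}, \]
where $P_j^\circ = \{\, \sum_i t_i a_i^{(j)} : 0 < t_i \leq 1\,\}$, so that every $h \in P_j^\circ$ lies in the relative interior of $\overline{\Delta_\tau}$ and hence satisfies $F(h) = \tau$.

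Next I would locate the rays responsible for the pole at $s = -\sigma$. Set $J_j = \{\, i : \nu(a_i^{(j)}) = \sigma N(a_i^{(j)}) \,\}$; since $(1/\sigma, \ldots, 1/\sigma) \in \tau_0$, this is exactly the set of $i$ with $F(a_i^{(j)}) \supseteq \tau_0$, i.e.\ with $a_i^{(j)} \in \overline{\Delta_{\tau_0}}$. By the compatibility of the triangulation, $\Delta^{(j)} \cap \overline{\Delta_{\tau_0}}$ is a face of $\Delta^{(j)}$ of dimension at most $\kappa$, so $|J_j| \leq \kappa$. For $i \notin J_j$ the strict inequality $\nu(a_i^{(j)}) > \sigma N(a_i^{(j)})$ holds, and the positive rational gap is bounded below by a constant depending only on $\Delta_0(f)$.

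Then I would pass to the limit as $s \to -\sigma$. From $N\mathfrak{p}^{s+\sigma} - 1 \sim (s+\sigma) \ln N\mathfrak{p}$ and $1 - N\mathfrak{p}^{N(a_i^{(j)})(s+\sigma)} \sim -N(a_i^{(j)})(s+\sigma)\ln N\mathfrak{p}$ for $i \in J_j$, the prefactor $(N\mathfrak{p}^{s+\sigma} - 1)^\kappa$ cancels a pole of order $|J_j|$ and leaves a nonzero contribution only when $|J_j| = \kappa$, in which case
\[ \lim_{s \to -\sigma} (N\mathfrak{p}^{s+\sigma} - 1)^\kappa \cdot \frac{\Sigma_j(s)}{\prod_i (1 - N\mathfrak{p}^{-\nu(a_i^{(j)}) - N(a_i^{(j)})s})} = \frac{(-1)^\kappa \, \Sigma_j(-\sigma)}{\prod_{i \in J_j} N(a_i^{(j)}) \cdot \prod_{i \notin J_j} (1 - N\mathfrak{p}^{-\nu(a_i^{(j)}) + \sigma N(a_i^{(j)})})}. \]
The factors indexed by $i \in J_j$ are constants coming from $\Delta_0(f)$; those indexed by $i \notin J_j$ are bounded uniformly in $\mathfrak{p}$ by the previous step. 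Finally, applying Cluckers' inequality to each $h \in P_j^\circ \cap \ZZ^n$ (which satisfies $F(h) = \tau$) yields $-\nu(h) + \sigma N(h) \leq \sigma(f_\tau) - \sigma$, so that $\Sigma_j(-\sigma) \leq \#(P_j^\circ \cap \ZZ^n) \cdot N\mathfrak{p}^{\sigma(f_\tau) - \sigma}$. Summing over the finitely many simplicial subcones then yields the desired bound $O(N\mathfrak{p}^{\sigma(f_\tau) - \sigma})$ with implied constant depending only on $\Delta_0(f)$.

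The main obstacle will be the combinatorial compatibility of the triangulation with $\overline{\Delta_{\tau_0}}$, which guarantees $|J_j| \leq \kappa$ and pinpoints exactly which simplicial subcones contribute in the limit. Once this is in place, the argument is a careful application of Cluckers' combinatorial inequality, both to the numerator lattice points $h$ and, via the observation that $(1/\sigma, \ldots, 1/\sigma) \notin F(a_i^{(j)})$ for $i \notin J_j$, to control the denominator factors indexed by rays outside $\overline{\Delta_{\tau_0}}$.
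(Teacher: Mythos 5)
Your proof is correct and takes essentially the same route as the paper: the paper's proof is a one-liner that cites Denef--Hoornaert's Lemma~5.16 (the simplicial triangulation of $\overline{\Delta_\tau}$ and the ray-by-ray pole count) and says to substitute Cluckers' inequality $\nu(a) \geq \sigma N(a) + \sigma - \sigma(f_\tau)$ for the Denef--Sperber inequality, which is exactly the argument you spell out. One small slip: for $i \in J_j$ the denominator factor is $1 - N\mathfrak{p}^{-N(a_i^{(j)})(s+\sigma)}$, whose leading term in $(s+\sigma)$ has the same sign as that of $N\mathfrak{p}^{s+\sigma}-1$, so the limit contributes $1/N(a_i^{(j)})$ without the spurious $(-1)^\kappa$; this does not affect the $O$-estimate.
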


\begin{proof}
Denef and Hoornaert proved~\cite[Lem.\,5.4]{denefhoornaert} that
\[ \lim_{s \rightarrow -\sigma} (N\mathfrak{p}^{s + \sigma} - 1)^\kappa S_{\tau_0}(s) = c_0 \]
for some constant $c_0 \in \RR_{>0}$ which does not depend on $\mathfrak{p}$. Since $\sigma(f_{\tau_0}) = \sigma$ this settles the case $\tau = \tau_0$. If $\tau \subsetneq \tau_0$ then we can redo the proof of~\cite[Lem.\,5.16]{denefhoornaert}, but instead of invoking the Denef-Sperber inequality stated in~\cite[Lem.\,5.15]{denefhoornaert}
we use Cluckers' result. 
\end{proof}

\subsection{} We can now conclude. If $\sigma < 1$ then
\begin{equation} \label{fraction}
  \frac{N\mathfrak{p}^{-\sigma} - 1}{N\mathfrak{p}^{1-\sigma} - 1}
\end{equation}
is a negative quantity which is in $O(N\mathfrak{p}^{\sigma - 1})$. Together with Lemma~\ref{plugin1} this implies that
\[ L_\tau(-\sigma) = O(N\mathfrak{p}^{\sigma - \sigma(f_\tau)}),\]
which along with Lemma~\ref{plugin2} implies that
\[  \lim_{s \rightarrow -\sigma} (N\mathfrak{p}^{s + \sigma} - 1)^{\kappa} Z_{f,\mathfrak{p}}(s) = O(1) \]
as wanted.
If $\sigma > 1$ then~\eqref{fraction} becomes a positive quantity which equals $1 + o(1)$, leading to the estimate
\[ L_\tau(-\sigma) = O(N\mathfrak{p}^{1 - \sigma(f_\tau)}),\]
so here we find that
\[  \lim_{s \rightarrow -\sigma} (N\mathfrak{p}^{s + \sigma} - 1)^{\kappa} Z_{f,\mathfrak{p}}(s) = O(N\mathfrak{p}^{1 - \sigma}), \]
again as wanted.
Finally if $\sigma = 1$ then using Lemma~\ref{plugin1} one finds that
\[ \lim_{s \rightarrow -1} (N\mathfrak{p}^{s+1} - 1) L_\tau(s) = O(N\mathfrak{p}^{1 - \sigma(f_\tau)}) \]
which together with Lemma~\ref{plugin2} shows that
\[  \lim_{s \rightarrow -1} (N\mathfrak{p}^{s + 1} - 1)^{\kappa + 1} Z_{f,\mathfrak{p}}(s) = O(1), \]
thereby concluding the proof of Theorem~\ref{dhconjecture}.

\end{document}